\numberwithin{equation}{section}
\newtheorem{thm}{Theorem}[section]
\newtheorem*{thm*}{Theorem}
\newtheorem{lemma}[thm]{Lemma}
\theoremstyle{remark}
\newtheorem{remark}[thm]{Remark}
\newcommand{\R}{\mathbb{R}}
\newcommand{\T}{\mathbb{T}}
\newcommand{\Z}{\mathbb{Z}}
\newcommand{\ep}{\varepsilon}
\newcommand{\con}{\equiv}
\newcommand{\ndiv}{\nmid}
\newcommand{\modd}[1]{\; ( \text{mod} \; #1)}
\newcommand{\bstack}[2]{#1 \atop #2}
\newcommand{\maps}{\rightarrow}
\newcommand{\Union}{\bigcup}
\newcommand{\union}{\cup}
\newcommand{\supp}{{\rm supp \;}}
\newcommand{\al}{\alpha}
\newcommand{\be}{\beta}
\newcommand{\del}{\delta}
\newcommand{\Del}{\Delta}
\newcommand{\om}{\omega}
\newcommand{\sig}{\sigma}
\newcommand{\lam}{\lambda}
\newcommand{\Bcal}{\mathcal{B}}
\newcommand{\beq}{\begin{equation}}
\newcommand{\eeq}{\end{equation}}
\def\@tocline#1#2#3#4#5#6#7{\relax
  \ifnum #1>\c@tocdepth 
  \else
    \par \addpenalty\@secpenalty\addvspace{#2}%
    \begingroup \hyphenpenalty\@M
    \@ifempty{#4}{%
      \@tempdima\csname r@tocindent\number#1\endcsname\relax
    }{%
      \@tempdima#4\relax
    }%
    \parindent\z@ \leftskip#3\relax \advance\leftskip\@tempdima\relax
    \rightskip\@pnumwidth plus4em \parfillskip-\@pnumwidth
    #5\leavevmode\hskip-\@tempdima
      \ifcase #1
       \or\or \hskip 1em \or \hskip 2em \else \hskip 3em \fi%
      #6\nobreak\relax
    \hfill\hbox to\@pnumwidth{\@tocpagenum{#7}}\par
    \nobreak
    \endgroup
  \fi}
\newcommand{\exendnote}[1]{}
\newcommand{\apphide}[1]{}
\begin{document}

\title[]{On Bourgain's counterexample for the Schr\"odinger maximal function}

\author[Pierce]{Lillian B. Pierce}
\address{Department of Mathematics, Duke University, 120 Science Drive, Durham NC 27708 USA }
\email{pierce@math.duke.edu}

\begin{abstract}
This paper provides a rigorous derivation of a counterexample of Bourgain, related to a well-known question of pointwise a.e. convergence for the solution of the linear Schr\"odinger equation, for initial data in a Sobolev space. This counterexample combines ideas from analysis and number theory, and the present paper demonstrates how to build such counterexamples from first principles, and then optimize them. 

\end{abstract}

\maketitle 
\begin{center}
\emph{Dedicated to the memory of Jean Bourgain}
\end{center}

\section{Introduction}

This paper provides a rigorous explanation of a criterion established  by Bourgain \cite{Bou16}, concerning the 
solution to the linear Schr\"odinger equation,
\[
\begin{cases}
i\partial_t u - \Del u = 0, \quad (x,t) \in \R^n \times \R, \\
u(x,0) = f(x), \quad x \in \R^n,
\end{cases}
\]
which is given for an appropriate initial data function $f$ (of Schwartz class for example) by
\[( e^{it \Del} f)(x) = \frac{1}{(2\pi)^n} \int_{\R^n} \hat{f}(\xi) e^{ i( \xi \cdot x + |\xi|^2 t)} d\xi.\]
A central question of Carleson \cite{Car80} asks for the optimal value of $s$ for which it is true that for all functions $f$ belonging to the Sobolev space $H^s(\R^n)$, the pointwise convergence result
\beq\label{ptwise}
 \lim_{t \maps 0} (e^{it \Del} f)(x) =f(x) 
 \eeq
holds for almost every $x \in \R^n$.
In dimension $n=1$,  Carleson proved it is sufficient to have $s \geq 1/4$ \cite[Eqn (14) p. 24]{Car80}, and this was shown to be necessary by Dahlberg and Kenig \cite{DahKen82}, thus resolving the one-dimensional case. In dimensions $n\geq 2$, the problem was studied by many authors, but remained open until 2019.
We only mention a few very recent highlights in the literature.
Lee \cite{Lee06} used bilinear techniques to show that in dimension $n=2$, $s> 3/8$  suffices  to guarantee pointwise a.e. convergence; Bourgain \cite{Bou13} then used multilinear techniques to prove  that for any dimension $n$, $s > 1/2 -1/(4n)$ suffices.
Also in \cite{Bou13}, Bourgain improved the necessary condition, writing that ``perhaps the most interesting point in this note is a disproof of what one seemed to believe, namely that $f \in H^s(\R^n),$ $s>1/4$, should be the correct condition in arbitrary dimension $n$.'' Precisely, Bourgain showed in that paper that $s \geq 1/2 -1/n$ is necessary,  using the distribution of lattice points on spheres.
Soon after, Luc\`{a} and Rogers  improved on this, showing that   $s \geq 1/2 - 1/(n+2)$ is necessary via a counterexample involving an ergodicity argument; this appeared in  \cite{LucRog19a}.  
Subsequently, an alternative argument for this  condition,  via pseudoconformal transformations, was given by Demeter and Guo  (see the preprint \cite{DemGuo16}).
 
Our focus is on the 2016 work of
Bourgain \cite{Bou16}, which proved via a counterexample construction that for any $n \geq 2$, the pointwise a.e. convergence (\ref{ptwise}) can fail if $s < \frac{n}{2(n+1)}$. Bourgain's acclaimed work furthermore suggested that $s \geq \frac{n}{2(n+1)}$ could be the optimal range for a positive result on pointwise convergence. Soon after, in dimension $n=2$, Du, Guth and Li \cite{DGL17} proved it is sufficient to have $s>1/3$, resolving all but the endpoint case in this dimension. In \cite{DGLZ18}, Du, Guth, Li and Zhang proved  $s > \frac{n+1}{2(n+2)}$ suffices. 
Finally,  landmark work of Du and Zhang \cite{DuZha19} resolved all but the endpoint cases  for all $n \geq 3$, proving that   $s > \frac{n}{2(n+1)}$ suffices in all dimensions.

Bourgain's influential counterexample  combined ideas from Fourier analysis and analytic number theory.
We recall the precise statement of \cite[Prop. 1]{Bou16}.
\begin{thm}[Bourgain]\label{thm_Bou}
Fix $n \geq 2$ and  $s < \frac{n}{2(n+1)}$.  There exists a sequence of real numbers $R_k \maps \infty$ as $k \maps \infty$, and a sequence of functions $f_k \in L^2(\R^n)$ such that $\|f_k\|_{L^2(\R^n)}=1$ and $\hat{f}_k$ is supported in an annulus $ \{(1/C)R_k \leq |\xi| <CR_k\}$, such that  
\beq\label{Bou_est}
 \lim_{k \maps \infty} R_k^{-s} \| \sup_{0<t<1} |e^{it \Del} f_k(x)| \|_{L^1(B_n(0,1))} = \infty.
 \eeq
 \end{thm}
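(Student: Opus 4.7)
The plan is to construct $f_k$ as a ``Fourier comb" at scale $R_k$, exploiting the arithmetic resonance of the Schr\"odinger propagator at rational times via the identity $e^{i|m|^2 \cdot 2\pi/q} = e^{2\pi i |m|^2/q}$ for $m \in \Z^n$, which reduces the evolution to a finite-dimensional Gauss sum over residues modulo $q$.

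Fix a frequency scale $R_k \to \infty$ and a companion scale $Q_k := R_k^{n/(n+1)}$ (the optimal choice, to be justified at the end by a volume-counting argument). Let $\Lambda_k := \Z^n \cap \{R_k \le |\xi| \le 2R_k\}$, a set of $\sim R_k^n$ integer lattice points in the required annulus, and set
\[
\hat f_k(\xi) := c_k \sum_{m \in \Lambda_k} \hat\psi(\xi - m),
\]
where $\psi$ is a fixed Schwartz function with $\hat\psi$ supported in a tiny neighborhood of the origin (so the translates $\hat\psi(\cdot - m)$ are disjoint), and $c_k \asymp R_k^{-n/2}$ is chosen to normalize $\|f_k\|_{L^2(\R^n)} = 1$. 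By construction $\hat f_k$ is supported in the annulus $\{R_k - 1 \le |\xi| \le 2R_k + 1\}$, as required.

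At each ``Gauss sum point" $(x_0, t_0) := (2\pi a/q,\, 2\pi/q)$, with $q$ an odd prime $\sim Q_k$ and $a \in \Z^n$ satisfying $|a| \le q/(2\pi)$, I would evaluate $(e^{it_0\Delta}f_k)(x_0)$ by stationary phase. Substituting $\xi = m + \eta$ and using that $|\eta|^2 t_0 = o(1)$ on the support of $\hat\psi$, one separates the arithmetic phase $e^{2\pi i(a \cdot m + |m|^2)/q}$---which depends only on $m \bmod q$ because $a \cdot m$ and $|m|^2$ are integers---from a slowly varying analytic envelope. Grouping by residue class, with each class in $(\Z/q\Z)^n$ having $\sim (R_k/q)^n$ representatives in $\Lambda_k$, the leading term factors as a complete $n$-dimensional Gauss sum
\[
G(1, a; q) := \sum_{\bar m \in (\Z/q\Z)^n} e^{2\pi i(|\bar m|^2 + a \cdot \bar m)/q}
\]
of absolute value $q^{n/2}$; completing the square modulo $q$ (valid since $2$ is invertible mod an odd prime) reduces $G(1, a; q)$ to a product of $n$ classical one-dimensional quadratic Gauss sums of modulus $\sqrt q$. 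This yields the pointwise lower bound
\[
|(e^{it_0 \Delta}f_k)(x_0)| \;\gtrsim\; c_k \cdot (R_k/q)^n \cdot q^{n/2} \;\asymp\; R_k^{n/2}\, q^{-n/2},
\]
and this amplitude persists on a ball of radius $\sim R_k^{-1}$ around $x_0$ by the uncertainty principle (since $e^{it_0\Delta}f_k$ has Fourier support at scale $R_k$).

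Summing over all focal $(q, a)$ pairs then gives the desired $L^1$ lower bound. For each prime $q \sim Q_k$ there are $\sim q^n$ admissible $a$'s, and the focal balls of radius $R_k^{-1}$ have combined volume $\asymp Q_k^{n+1} R_k^{-n}$. The choice $Q_k = R_k^{n/(n+1)}$ equates this total volume to $\asymp 1 = |B_n(0,1)|$, so by the Farey-type equidistribution of the rationals $2\pi a/q$, the focal balls cover a positive fraction of $B_n(0,1)$. On this set the supremum-in-$t$ is uniformly bounded below by the focal amplitude $\gtrsim R_k^{n/2} Q_k^{-n/2} = R_k^{n/(2(n+1))}$, yielding
\[
\Big\| \sup_{0<t<1} |e^{it\Delta}f_k| \Big\|_{L^1(B_n(0,1))} \;\gtrsim\; R_k^{n/(2(n+1))}(\log R_k)^{-1},
\]
so the ratio with $R_k^s$ tends to infinity whenever $s < n/(2(n+1))$. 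The main obstacle will be the rigorous justification of the stationary-phase separation underlying the Gauss sum reduction: one must quantify the error incurred when the smooth integral over $\hat\psi(\xi - m)$ is approximated by a pure Gauss sum, uniformly over $q \sim Q_k$ and $a \in \Z^n$, and one must verify the essential disjointness (or controlled overlap) of focal balls at distinct rationals using the Farey spacing $|2\pi a/q - 2\pi a'/q'| \gtrsim (qq')^{-1}$, arguing that the union of focal balls retains positive measure despite possible clustering.
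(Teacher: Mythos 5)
Your construction has a fatal flaw in the amplitude estimate at the focal points. After the substitution $\xi = m + \eta$ and Fourier inversion, the ``slowly varying analytic envelope'' you invoke is (up to normalization) $\psi(x_0 + 2m t_0)$: the cross term $2 m\cdot\eta\, t_0$ in the expansion of $|m+\eta|^2 t_0$ produces exactly this shift in the argument of $\psi$. With $|m| \sim R_k$ and $t_0 = 2\pi/q \sim R_k^{-n/(n+1)}$, the shift has magnitude $|2mt_0| \sim R_k/q \sim R_k^{1/(n+1)} \to \infty$, while $|x_0| \le 1$; since $\psi$ is Schwartz, the factor $\psi(x_0 + 2mt_0)$ decays faster than any power of $R_k$ for \emph{every} $m \in \Lambda_k$. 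The effective support $\{m : \psi(x_0 + 2mt_0) \not\approx 0\}$ is a ball of radius $O(1/t_0) = O(q)$ centered at $-x_0/(2t_0)$, a point of norm $\lesssim q \ll R_k$, and this ball is disjoint from your annulus $\Lambda_k$. Consequently $(e^{it_0\Delta}f_k)(x_0)$ is negligibly small rather than $\gtrsim c_k(R_k/q)^n q^{n/2}$; you have implicitly treated the envelope as $\sim 1$ uniformly across $\Lambda_k$, which overstates the amplitude by a factor of $(R_k/q)^n$, and this erases the entire gain.

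The tension is fundamental: you need $q \ll R_k$ so that each residue class has many representatives (making the complete Gauss sum dominate the incomplete pieces), but you need $t \lesssim 1/R_k$ so the Schwartz profile does not vanish on the frequency support, and if $t = 2\pi/q$ this forces $q \gtrsim R_k$. Bourgain's construction, as carried out in this paper, breaks the tie by inserting an intermediate scale $L$: the frequency comb lives on $L\Z^{n-1}$ (with a single smooth bump of Fourier width $S_1 = R^{1/2}$ in the remaining coordinate), so the quadratic phase reads $L^2|m'|^2 t$ with $m_j \sim R/L$. Only the residue of $L^2 t$ modulo $2\pi$ enters the Gauss sum, and a tiny perturbation of $t$ within the window $|\tau| \leq c_2/(S_1 R)$ suffices to realize $L^2 t \equiv 2\pi a_1/q \pmod{2\pi}$ while keeping $t \sim 1/R$; then the profile $\Phi_{n-1}(x' + 2Lm't)$ stays $\approx 1$ uniformly because $|2Lm_j t| \lesssim R\cdot R^{-1} = O(1)$. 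The compatibility condition $Q^{-1} \leq L^2/(S_1 R)$ of (\ref{UV_assps}) encodes precisely this, and the optimization in Section~\ref{sec_parameters} yields $L = R^{(n+2)/(2(n+1))}$, $Q = R^{(n-1)/(2(n+1))}$ as in (\ref{LR_guess}) --- a quite different parametrization from your $Q_k = R_k^{n/(n+1)}$. Your construction would need to be rebuilt around this decoupling of the magnitude of $t$ from the fractional part of the quadratic coefficient.
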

 \noindent
To recall why this result implies the failure of (\ref{ptwise}) for such $s$, see Appendix A.

Bourgain's original treatment \cite{Bou16} provided a skeletal overview of the construction of the functions $f_k$.
Our aim is to flesh out these ideas, providing not only a rigorous derivation of Theorem \ref{thm_Bou}, but also an animation of how to build a counterexample from first principles.

We  construct Bourgain's counterexample and prove Theorem \ref{thm_Bou} in  three stages: first we examine the basic construction of a test function $f$ as a product of smooth one-variable functions that have been scaled and modulated. Second, we construct our ultimate test function $f$ as a sum of such functions so as to introduce arithmetic behavior to $(e^{it \Delta}f)(x)$. Third, we construct a set of $x$ for each of which a corresponding value of $t$ may be chosen so that this arithmetic behavior can be evaluated precisely, in the form of a Gauss sum, leading to a lower bound for $|(e^{it \Delta}f)(x)|$.  

 To initiate our discussion, we start with generic parameters. As the argument proceeds, we will have to assume various constraints on the parameters, and ultimately we will rigorously determine an optimal choice of parameters under these constraints. In particular, this   will clearly reveal the fundamental limitation of Bourgain's construction (which is   confirmed to be optimal, up to the endpoint, by the positive results of \cite{DuZha19}).
We anticipate that this ``handbook'' of the relevant ideas   at the intersection of analysis and number theory will be   useful for future work on the many remaining open problems in the area.
 
For example, we mention five possible directions of current interest, which motivate the present exposition.
 After Bourgain's work \cite{Bou16}, Luc\`a and Rogers \cite{LucRog19} provided a different counterexample construction to also recover the necessity of $s \geq n/(2(n+1))$,
  via  ergodicity arguments. Along with their earlier work \cite{LucRog19a}, this  importantly also extends to the study initiated in \cite{SjoSjo89} of divergence on sets of lower-dimensional Hausdorff measure; see for example \cite{LucRog17,LucRog19} for open questions. 
Recently, \cite{DKWZ19}
used Bourgain's counterexample as a ``black box'' input for a construction that shows that the local estimate
\[ \| \sup_{0<t<1} |(e^{it \Del} f) (x)|\; \|_{L^p(B_n(0,1))}  \leq C_s \|f\|_{H^s(\R^n)} \]
for all $s > \frac{n}{2(n+1)}$ can fail if $p> 2 + \frac{4}{(n-1)(n+2)}$. This raises an open question,  also stated in \cite{DuZha19}:
to determine the optimal $p=p(n)$ for which this local estimate holds for all $s > \frac{n}{2(n+1)}$, and to identify the optimal $s=s(n,p)$ for which the local estimate holds for a fixed $p>2$. 
In another direction, \cite{CFW18} studies the rate of pointwise convergence, for   $s$ such that a.e. convergence occurs in (\ref{ptwise}).
Next,   \cite{DemGuo16} asks for the sharp value of  $s$ for pointwise convergence questions related to other curved
hypersurfaces $(\xi,\phi(\xi)) \subseteq \R^{n+1}$, generalizing the paraboloid $(\xi,|\xi|^2)$. Initial positive results for one  such class of $\phi$ have been obtained in \cite{ChoKo18x}. This direction also relates to a broad class of open questions posed by Bourgain  \cite[\S 5]{Bou13} in the context of maximal functions associated to oscillatory integral operators. Finally, there are the corresponding questions  in the periodic case; see for example   \cite{MoyVeg08}.

\subsection{Acknowledgements}
When Bourgain's counterexample came out, a number of people contacted me with questions about how it worked. This note answers those questions,  and moreover explains how one would naturally \emph{arrive} at this construction, and optimize it. This note is intended to be accessible to a broad audience, and to give an appreciation of Bourgain's view of this problem, connecting analysis and number theory.
I thank Valentin Blomer, Renato Luc\`a, Keith Rogers, Ruixiang Zhang, and the referee for a number of  helpful comments. I also thank Po-Lam Yung and Jongchon Kim for many insightful suggestions and corrections to an earlier draft, and additionally Kim for contributions to Appendix A.

\subsection{Notation}
We denote by $B_m(c,r)$  the Euclidean ball in $\R^m$, centered at $c$ and of radius $r$. We use the conventions   that $e(x) = e^{ix}$ and $\hat{f}(\xi) = \int_{\R^m} f(x) e^{-i x \cdot \xi} dx$, so that correspondingly $f(x) = (2\pi)^{-m} \int_{\R^m} \hat{f}(\xi)e^{ i x \cdot \xi} d\xi$ and Plancherel's theorem states $\|f\|_{L^2(\R^m)}^2 = (2\pi)^{-m}\|\hat{f} \|_{L^2(\R^m)}^2$. For an appropriately smooth and sufficiently decaying function $\Phi$ on $\R^m$ (for example of Schwartz class), for any shift $M \in \R^m$ and any scaling factor $S>0$,  
\[[\Phi (Sx)e(M\cdot x)]\hat{\;}(\xi) =  \frac{1}{S^m}\hat{\Phi}\left(\frac{\xi - M}{S}\right).\]
Thus if  $\hat{\Phi}$ is supported in $B_m(0,1)$ the Fourier transform of $ \Phi(Sx)e(M\cdot x)$ is supported in $B_m(M,S)$. 
By Plancherel's theorem,
\[ \| \Phi (Sx)e(M\cdot x) \|_{L^2(dx)} = (2\pi)^{-m/2} \|[\Phi (Sx)e(M\cdot x)]\hat{\;}(\xi)\|_{L^2(d\xi)} = S^{-m/2} \|\Phi\|_{L^2}.\]
It will be convenient to scale each variable independently, and thus for $S \in \R^m_{>0}$ we define $S \circ x = (S_1x_1,\ldots, S_mx_m)$, and let $S^{-1} = (S_1^{-1},\ldots, S_m^{-1})$ and $\|S\| = \prod S_j$. Then the Fourier transform of the function $\Phi (S\circ x)e(M\cdot x)$ is $\|S\|^{-1} \hat{\Phi}(S^{-1} \circ (\xi-M))$ and the  $L^2$ norm is $\|S\|^{-1/2} \|\Phi\|_{L^2}$.

\section{The basic motivating construction}\label{sec_basic}

We record the version of  Theorem \ref{thm_Bou} that we prove, as follows.
\begin{thm}\label{thm_restate}
Let $n \geq 2$ and $s>0$, and suppose that there is a constant $C_s$ such that for all   $f \in H^s(\R^n)$,
\beq\label{Bou'}
\| \sup_{0<t<1} | e^{it \Del}f| \; \|_{L^1(B_n(0,1))} \leq C_s \|f \|_{H^s(\R^n)}.
\eeq
Then $s \geq \frac{n}{2(n+1)}.$
\end{thm}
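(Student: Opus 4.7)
The plan is to disprove (\ref{Bou'}) for $s<n/(2(n+1))$ by constructing, for each large $R$, a test function $f=f_R\in L^2(\R^n)$ with $\hat{f}$ supported in an annulus of radius $\asymp R$ and such that
\[
\Bigl\|\sup_{0<t<1}|e^{it\Delta}f|\Bigr\|_{L^1(B_n(0,1))}\gtrsim R^{\sigma}\|f\|_{L^2}
\]
for every $\sigma<n/(2(n+1))$. Since Fourier localisation at scale $R$ gives $\|f\|_{H^s}\asymp R^s\|f\|_{L^2}$, the hypothesis (\ref{Bou'}) then forces $R^\sigma\lesssim R^s$, and letting $\sigma\uparrow n/(2(n+1))$ yields $s\geq n/(2(n+1))$. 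Following the three-stage strategy outlined in the introduction, the building block is a modulated Schwartz bump
\[
f_m(x)=\Phi(Sx)\,e(\omega_m\cdot x),
\]
where $\hat\Phi$ is supported in a unit ball, $S>0$ is a (for simplicity isotropic) scaling parameter, and $\omega_m\in\R^n$ ($m\in\Mcal\subset\Z^n$, $|\Mcal|=N^n$) are modulation frequencies chosen so that $|\omega_m|\asymp R$; a natural choice is $\omega_m=\omega_0+(R/N)m$ with $m$ in a lattice cube of side $N\leq R$ and $\omega_0$ a fixed base point of magnitude $R$. The summed test function $f=\sum_{m\in\Mcal}f_m$ has disjoint Fourier supports, giving $\|f\|_{L^2}^2\asymp N^n S^{-n}$.

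The arithmetic input arises from the identity
\[
(e^{it\Delta}f_m)(x)=\frac{e(\omega_m\cdot x+|\omega_m|^2 t)}{(2\pi)^n S^n}\int\hat\Phi(\eta/S)\,e\bigl(\eta\cdot x+2\omega_m\cdot\eta\,t+|\eta|^2 t\bigr)d\eta,
\]
obtained by substituting $\xi=\omega_m+\eta$. The leading factor $e(\omega_m\cdot x+|\omega_m|^2 t)$ is arithmetic in $m$: for $\omega_m=\omega_0+(R/N)m$, its quadratic-in-$m$ part is $(R/N)^2|m|^2 t$, which at the special rational times $t\sim 2\pi aN^2/(qR^2)$ with $\gcd(a,q)=1$ becomes $2\pi a|m|^2/q$. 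At such times, and at points $x$ on a suitable rational grid (dictated by the linear-in-$m$ part of the leading phase), the sum $\sum_{m\in\Mcal}e(\omega_m\cdot x+|\omega_m|^2 t)$ reduces to an $n$-dimensional complete quadratic Gauss sum of magnitude $\asymp q^{n/2}$, provided the Gauss-sum completeness condition $q\lesssim N^2$ holds and the secondary phases $\eta\cdot x+2\omega_m\cdot\eta t+|\eta|^2 t$ remain $O(1)$ on $|\eta|\lesssim S$ (which requires $RSt\lesssim 1$ and $S^2 t\lesssim 1$). Plugging this into (\ref{Bou'}) along with $\|f\|_{L^2}\asymp N^{n/2}S^{-n/2}$ and $\|f\|_{H^s}\asymp R^s\|f\|_{L^2}$ yields an inequality in $(R,N,S,q)$ which, after the natural choices $S\sim R$, $q\sim N^2$, and $t\sim 1/R^2$, optimises to $s\geq n/(2(n+1))$ at $N\sim R^{1/(n+1)}$.

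The main obstacle is precisely this joint parameter optimisation. Five conditions must be satisfied simultaneously: (i) the Gauss-sum completeness $q\lesssim N^2$; (ii) the dispersion condition $RSt\lesssim 1$, so that all $m\in\Mcal$ contribute coherently at the chosen $(x,t)$; (iii) the residual secondary-phase condition $S^2 t\lesssim 1$; (iv) disjointness of the spatial bumps associated to distinct rational $x$-grid points, so that the contributions add rather than interfere in the $L^1$ norm; and (v) the physical constraint $t<1$ together with the frequency-annulus condition $|\omega_m|\asymp R$. Each is an inequality in $(R,N,S,q)$, and showing that the system jointly saturates $\sigma=n/(2(n+1))-\epsilon$ for every $\epsilon>0$ --- and that no finer tuning can improve the exponent --- is the technical heart of the argument, and is where Bourgain's skeletal outline in \cite{Bou16} most needs to be fleshed out with care. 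Equally subtle, and intrinsic to the $L^1$-formulation of (\ref{Bou'}), is verifying that the Gauss-sum-sized peaks in $|(e^{it\Delta}f)(x)|$ populate a set of spatial measure large enough to convert this pointwise bound into the desired $L^1$ lower bound on $B_n(0,1)$.
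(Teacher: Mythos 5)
Your high-level strategy matches the paper's, but the isotropic bump $\Phi(Sx)$ with an $n$-dimensional modulation lattice is the wrong construction, and this is a structural gap, not just a technical one. After your substitution the main term of $e^{it\Delta}f_m$ is $\Phi(S(x+2\omega_m t))\,e(\omega_m\cdot x+|\omega_m|^2 t)$. For all $m$ to contribute coherently you need $|x_j+2\omega_{0,j}t|\lesssim 1/S$ in every coordinate $j$ simultaneously, together with $RSt\lesssim 1$; choosing $t$ matches only one of these $n$ constraints, and the surviving $x$-set is a $1/S$-thick tube in $B_n(0,1)$ of measure $\lesssim S^{-n}$. This is exactly the obstruction the paper identifies at the end of Section 2 (``This would force $x$ to lie in a set of measure at most $R^{-(n-1)}\ldots$''), and if you run the parameter optimisation for the symmetric construction (with the corrections below) it saturates at the weaker Luc\`a--Rogers exponent $s\geq n/(2(n+2))$, not at $n/(2(n+1))$. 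The crucial device you are missing is the paper's asymmetry: the first coordinate gets a narrow bump $\phi(S_1 x_1)$ with $S_1=R^{1/2}$ and a \emph{single} modulation $\approx R$ --- this pins $t$ as a function of $x_1$ without shrinking the $x_1$-set --- while the other $n-1$ coordinates have scale $1$ (so $|x_j+2Lm_jt|\lesssim 1$ is automatic once $t\lesssim 1/R$) and carry the full modulation lattice $Lm_j$ that produces the Gauss sums. The $S_1^{1/2}=R^{1/4}$ gain in the $L^2$-normalisation comes only from the narrow first coordinate; the Gauss-sum gain comes from the other $n-1$; and the good $x$-set still has measure close to $1$. No isotropic choice of $(S,N,q)$ can decouple these two roles, so your approach cannot reach the target exponent.

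Two further errors make your stated optimisation internally inconsistent. First, for the sum over $m\in[0,N)$ with quadratic phase $2\pi a m^2/q$ to break into \emph{complete} Gauss sums you need $q\leq N$ (a full period inside the range), not $q\lesssim N^2$, and the resulting magnitude is $\approx(N/q)\,q^{1/2}=N/\sqrt{q}$ per coordinate, hence $(N/\sqrt{q})^n$ total --- not $q^{n/2}$. When $q>N$ the sum is incomplete; Weyl's inequality only gives an \emph{upper} bound, which is useless for the required lower bound, so $q\sim N^2$ is fatal. (In the paper this appears as $R/L\geq q^{1+\Delta_0}$ and a main term of size $(R/(Lq^{1/2}))^{n-1}$.) Second, $S\sim R$ together with $N\sim R^{1/(n+1)}$ violates Fourier-support disjointness: the bumps $\hat{f}_m$ of width $\sim S$ sit at spacing $R/N\ll S$, so they overlap by a factor $\sim N^n$ and $\|f\|_{L^2}^2\not\asymp N^n S^{-n}$. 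The optimisation the paper actually performs is over three exponents $\sigma,\lambda,\kappa$ (for $S_1=R^\sigma$, $L=R^\lambda$, $Q=R^\kappa$) subject to two linear Dirichlet-tolerance constraints, and $n/(2(n+1))$ is the value at the unique extremal vertex $\sigma=1/2$, $\lambda=(n+2)/(2(n+1))$, $\kappa=(n-1)/(2(n+1))$; this is a genuinely different, lower-dimensional linear program than the five-condition system you set up.
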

It suffices to prove that for each $s< \frac{n}{2(n+1)}$ we can construct a sequence $\{ f_k \}$  such that 
\[ 
\lim_{k \maps \infty}  \frac{ \| \sup_{0<t<1} | e^{it \Del}f_k| \; \|_{L^1(B_n(0,1))}}{ \|f_k\|_{H^s(\R^n)}}=\infty.\]
Recall that the Sobolev space $H^s(\R^n)$ (Bessel potential space)  is the class of functions 
such that $(1-\Del)^{s/2}f$ lies in $L^2(\R^n)$, or equivalently such that $G_{-s} * f \in L^2(\R^n)$, 
where the Bessel kernel $G_{-s}$ is defined according to its Fourier transform  
$\hat{G}_{-s}  (\xi) = (1 + |\xi|^2)^{s/2}$.
     Plancherel's theorem shows that
\[ \| f \|_{H^s(\R^n)}^2 = \| G_{-s}  * f\|_{L^2(\R^n)}^2 = (2\pi)^{-n} \| \hat{G}_{-s} \hat{f}\|_{L^2(\R^n)}^2 = (2\pi)^{-n} \int_{\R^n} (1 + |\xi|^2)^{s} |\hat{f}(\xi)|^2 d\xi.\]
In particular if $\hat{f}$ is supported in the annulus $\{ R/C \leq |\xi| <CR\}$ for a constant $C>1$ then for every $R \geq C^{-1}$,
\[ 
  C^{-s} R^{s}\|f\|_{L^2(\R^n)} \leq \| f \|_{H^s(\R^n)}  \leq   2^{s/2} C^{s} R^{s}\|f\|_{L^2(\R^n)}.
\]
Thus it suffices to show that for every $s < \frac{n}{2(n+1)}$, there exist constants $C=C(n)$, $A_s =A(s,n)$ and $R_0 = R_0(s,n)$ and a value $s'>s$ such that the following holds: for each integer $R \geq R_0$ there exists a  function $f_R,$ with $\hat{f}_R$  supported in an annulus $A_n(R,C) :=\{R/C \leq  |\xi| < CR\}$, such that 
\beq\label{Bou''}
   \frac{ \| \sup_{0<t<1} | e^{it \Del}f_R| \; \|_{L^1(B_n(0,1))}}{R^{s'} \|f_R \|_{L^2(\R^n)}}\geq A_s   .
\eeq
Then in particular, given any constant $C_s$ we can choose $R$ sufficiently large that the corresponding   function $f_R$ violates (\ref{Bou'}), as desired.
To prove (\ref{Bou''}) for a function $f_R$, it suffices to construct a set $\Omega^*$ in $B_n(0,1)$
with positive measure (independent of $R$) such that for each $x$ in the set, there exists some $t \in (0,1)$ for which 
\beq\label{goal_tilde_f}
\frac{ |(e^{it \Del} f_R)(x)| }{\|f_R\|_{L^2(\R^n)}} \geq A_s R^{s'}.
  \eeq
The reader can think of this as our goal, although the set $\Omega^*$ we construct will have a small dependence on $R$, and thus we will formally prove (\ref{Bou''}).

\subsection{The basic construction}
We now fix $R \geq 1$. To begin our construction of an appropriate function $f = f_R$, we let $\phi$ be a  Schwartz function on $\R$ that takes non-negative values, and such that $\phi(0)  =\frac{1}{2\pi} \int \hat{\phi}(\xi)d\xi = 1$ and  $\hat{\phi}$ is supported in $[-1,1]$. 
For $x \in \R^n$ we define $\Phi_n(x) = \prod_{i=1}^n \phi(x_i)$.  Since $\phi$ is fixed once and for all, any constants will be allowed to depend on $\phi$.
(Note: to construct such a function $\phi$, let $\psi \in C_0^\infty(B_1(0,1/4))$ be such that $\frac{1}{2\pi}\int \psi(\xi) d\xi=1$. 
Then define $\phi$ according to $\hat{\phi} =\frac{1}{2\pi} \psi * \overline{\psi(- \cdot)}$, so that $\phi = |\check{\psi}|^2$, in which $\check{\psi} (x)= \frac{1}{2\pi} \int \psi (\xi) e^{i x \xi}d\xi$.)

We wish for $\hat{f}$ to be supported in an annulus $A_n(R,C) :=\{R/C \leq  |\xi| < CR\}$ for some fixed $C=C(n)>1$. 
 It is natural to begin with a candidate function of the shape
\beq\label{f_try1}
 f(x) =    \Phi_n(S \circ x) e(M\cdot x)
 \eeq
 for some $M \in \R^n$ and $S \in \R_{>0}^n$.  Temporarily let $\mathcal{B}$ denote the box $\prod [-S_j,S_j]$ so that $\hat{f}$ is supported in $\mathcal{B}+M$. If each coordinate of $M$ is about of size $R$ and each $S_j$ is an order of magnitude smaller, this support will be contained in an appropriate annulus. Precisely, we suppose each $M_j$ satisfies $R \leq M_j < 2R$ and $S^* = \max_j S_j \leq R^\sig$ for some $\sig< 1$. Then $\mathcal{B}+M \subset B_n(0, \sqrt{n} \cdot 2R+ \sqrt{n} S^*)\setminus B_n(0,\sqrt{n} R-\sqrt{n}S^*)$, so that once $n,\sig$ are fixed, there exists $R_1 = R_1(n,\sig)$ such that for all $R \geq R_1$, $\mathcal{B}+M \subset A_n(R,4\sqrt{n})$ for all such $M$.

For $f$ as defined above we have 
\begin{align}
 ( e^{it \Del} f)(x) & = \frac{1}{(2\pi)^n}\int_{\R^n}   \hat{\Phi}_n(\xi) e( (S \circ \xi + M)\cdot x
	 +  |S \circ \xi + M|^2 t)   d \xi  \nonumber \\
	 &  = e(M \cdot x + |M|^2 t)  \frac{1}{(2\pi)^n}\int_{\R^n}  \hat{\Phi}_n(\xi) e ( \xi \cdot (S\circ (x + 2Mt)) +  |S \circ \xi|^2 t ) d\xi. \label{integral_prod}
	 \end{align}
 We notice that  if $t$ is very small so that the term that is quadratic in $\xi$ is very small, then the integral should be well-approximated by an integral with linear phase, which we can evaluate precisely using
\beq\label{Ft_value}
  \frac{1}{(2\pi)^n}\int_{\R^n}  \hat{\Phi}_n(\xi) e ( \xi \cdot (S\circ (x + 2Mt))) d\xi =  \Phi_n(S\circ (x+2Mt)).
 \eeq
Since we constructed $\Phi_n$ so that $ \Phi_n(0)=1$, if we choose $S,M,x,t$ so that $S\circ (x+2Mt)$ is sufficiently close to the origin, by continuity we can give a lower bound  $ \Phi_n(S\circ (x+2Mt)) \geq 1- c_0$ for a  small $c_0>0$ of our choice. We also notice that the isolation of the factor $e(M \cdot x + |M|^2 t)$ in (\ref{integral_prod}) could allow us to utilize Diophantine properties of $x,t$. Of course on its own this factor has norm one, but instead of defining  $f$ as in (\ref{f_try1}), we could define $f$ as a finite number of summands of the form (\ref{f_try1}) for certain values of $M \in \Z^n$, and then in place of $e(M \cdot x + |M|^2 t)$ we would have an exponential sum, which we could evaluate. 

In the remainder of this section, we make these ideas rigorous for a single function defined by (\ref{f_try1}), by first justifying the approximation allowing us to reduce to (\ref{Ft_value}), which will also motivate our choice of the scaling parameter $S$, and will begin to refine our choices for $x$ and $t$. 
Motivated by this discussion,  in the next section we will re-define $f$ as a finite sum of terms like (\ref{f_try1}), which will allow us to take advantage of number-theoretic properties of exponential sums.

\subsection{Removal of the quadratic phase}
 By construction,   the integral in (\ref{integral_prod}) factors, so we can work one dimension at a time. Our simple tool is the following fact: the integral of a function $\mu$ weighted by $e(h(t))$ can be well-approximated by the integral of $\mu$ alone, as long as the derivative of $h$ is sufficiently small.  

\begin{lemma}\label{lemma_int_approx}
Let $a<b$ be fixed real numbers. Let $\mu$ be an integrable function on $\R$, and let $h$ be a real-valued $C^1$ function on $\R$.
Then
\[\int_a^b \mu(t) e(  h(t))dt = e(h(b)) \int_a^b \mu(t)   dt + E  \]
where 
\[ |E| \leq \| \mu\|_{L^1[a,b]} \|h'\|_{L^\infty[a,b]} \cdot (b-a). \]  
\end{lemma}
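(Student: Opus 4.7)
The plan is to write $e(h(t))$ as $e(h(b))$ plus a small correction, factor out $e(h(b))$ from the main integral, and estimate the correction using the mean value theorem and the fact that $e(\cdot) = e^{i\,\cdot}$ is $1$-Lipschitz as a map from $\R$ to the unit circle.

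More concretely, I would split
\[
\int_a^b \mu(t) e(h(t))\,dt = e(h(b))\int_a^b \mu(t)\,dt + \int_a^b \mu(t)\bigl(e(h(t)) - e(h(b))\bigr)\,dt,
\]
so the error term is $E = \int_a^b \mu(t)(e(h(t)) - e(h(b)))\,dt$. Using the convention $e(x) = e^{ix}$ and the elementary estimate $|e^{i\alpha} - e^{i\beta}| \leq |\alpha - \beta|$ (which follows from $|\int_\beta^\alpha i e^{i\tau} d\tau| \leq |\alpha-\beta|$), together with the bound $|h(t) - h(b)| \leq \|h'\|_{L^\infty[a,b]} \cdot |t-b| \leq \|h'\|_{L^\infty[a,b]} \cdot (b-a)$ coming from the fundamental theorem of calculus for the $C^1$ function $h$, I obtain
\[
|e(h(t)) - e(h(b))| \leq \|h'\|_{L^\infty[a,b]} \cdot (b-a)
\]
uniformly in $t \in [a,b]$. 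Pulling this pointwise bound outside the integral gives
\[
|E| \leq \|h'\|_{L^\infty[a,b]} \cdot (b-a) \cdot \int_a^b |\mu(t)|\,dt = \|\mu\|_{L^1[a,b]} \|h'\|_{L^\infty[a,b]} \cdot (b-a),
\]
which is the desired estimate.

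I do not anticipate any real obstacle here: the lemma is a clean quantitative version of the trivial observation that $e(h(t))$ is nearly constant on an interval where $h$ barely moves, and every ingredient (Lipschitz bound on the unit circle, mean value inequality for $C^1$ functions, triangle inequality for integrals) is entirely standard. The only minor point worth flagging is that the choice of the ``reference'' point $h(b)$ in the main term is purely cosmetic — any other point in $[a,b]$ would work with the same bound — and the lemma will later be applied with $\mu$ an amplitude factor and $h$ the quadratic phase in $t$, so the relevant quantity $\|h'\|_{L^\infty} (b-a)$ is exactly the total phase variation over the interval of integration.
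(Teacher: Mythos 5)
Your proof is correct, and it takes a genuinely different (and slightly more elementary) route than the paper. You bound the error $E=\int_a^b \mu(t)\bigl(e(h(t))-e(h(b))\bigr)\,dt$ pointwise, via the Lipschitz estimate $|e^{i\alpha}-e^{i\beta}|\le|\alpha-\beta|$ on the circle combined with the mean value inequality for $h$, then pull the absolute values inside the integral. The paper instead integrates by parts: setting $M(t)=\int_a^t\mu(y)\,dy$, one writes
\[
\int_a^b\mu(t)e(h(t))\,dt = e(h(b))M(b) - i\int_a^b M(t)\,h'(t)\,e(h(t))\,dt,
\]
and then bounds the second term by $\|\mu\|_{L^1[a,b]}\|h'\|_{L^\infty[a,b]}(b-a)$ using $|M(t)|\le\|\mu\|_{L^1[a,b]}$. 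Both arguments yield the identical bound, so neither is sharper here. The advantage of your version is that it is self-contained and conceptually transparent: $e(h(t))$ is nearly constant because $h$ barely moves, period. The advantage of the paper's integration-by-parts version is structural: it is the continuous analogue of the Abel/partial summation identity stated as Lemma~\ref{lemma_partial_sum} and used repeatedly in the exponential-sum estimates later in the paper, so phrasing both tools the same way makes the discrete/continuous parallel explicit; moreover, in situations where $\int_a^t\mu$ enjoys extra cancellation the integration-by-parts form can be exploited further, though that extra strength is not needed for this lemma. Your remark that the reference point $h(b)$ is arbitrary is correct and worth keeping in mind.
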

This follows from integration by parts, since
\[\int_a^b \mu(t) e(  h(t))dt = e(h(b)) \int_a^b \mu(y) dy - i \int_a^b (\int_a^t \mu(y)  dy) h'(t) e(h(t)) dt.\]
Fix $1 \leq j \leq n$ and apply the lemma to the $\xi_j$-th integral in  (\ref{integral_prod}), obtaining
\beq\label{int_1}
\frac{1}{2\pi} \int_{-1}^1 \hat{\phi}(\xi_j)e(\xi_jS_j(x_j+2M_jt)) e( S_j^2\xi_j^2 t)d\xi_j
	 = e(S_j^2  t)\phi(S_j(x_j+2M_jt))  + E
	 \eeq
with $|E| \leq (4/2\pi) \|\hat{\phi}\|_{L^1} S_j^2t \leq   \|\hat{\phi}\|_{L^1} S_j^2t$. Since $\phi(0)=1$ and $\phi$ is smooth, given any small $0<c_0 < 1/2$ there exists a $\del_0(c_0) \leq 1$ (depending on $\phi$) such that for any $\del_0 \leq \del_0(c_0)$, for all $|y| \leq \del_0$ we have $\phi(y) \geq 1-c_0/2$. 
Thus given $x_j$, if we choose $t$ such that $t=-x_j/(2M_j) + \tau$ with $|\tau| \leq \del_0/(2S_jM_j)$, and also $t \leq c_0/(4 \|\hat{\phi}\|_{L^1} S_j^2)$, then by (\ref{int_1}),
\beq\label{int_factor_j_inverse}
 |\frac{1}{2\pi} \int_{-1}^1 \hat{\phi}(\xi_j)e(\xi_j(S_j(x_j+2M_jt)) + S_j^2\xi_j^2 t)d\xi_j| \geq 1-c_0.
 \eeq
 In order for the two conditions on $t$ to be compatible,  we learn that 
 $x_j/(2M_j)$ and $\del_0/(2S_jM_j)$ must each be no bigger than $c_0/(8\|\hat{\phi}\|_{L^1}S_j^2)$. From this, we learn that
 we should focus on $x_j$ in a small neighborhood of the origin, say 
 \beq\label{x_range}
 |x_j| \leq c_1 < \del_0/2
 \eeq
  (with $c_1$ chosen appropriately, depending on $c_0, \phi$).
We also learn that we must have $S_j \leq M_j^{1/2}$, so that upon recalling that $R \leq M_j< 2R$, the largest we could take $S_j$ is of size $R^{1/2}$.

For one fixed coordinate $j$, for such $x_j$, we can thus choose $t$ and the parameters $M_j, S_j$ to justify (\ref{int_factor_j_inverse}).
But we would like to do so for all coordinates simultaneously. After $x_j$ is fixed, $t$ is constrained to a $\del_0/(2S_jM_j)$-neighborhood of $-x_j/(2M_j)$, so in particular, once $t$ is chosen to be compatible in this manner with $x_1$, in order for the same $t$ to also be compatible with $x_j$ for $j=2,\ldots, n$, the point $x$ would need to lie in a small set, of measure at most on the order of $\prod_{j=2}^n (S_jM_j)^{-1}$. This would force $x$ to lie in a set of measure at most $R^{-(n-1)}$, so with the goal of obtaining a set $x$ of positive measure   independent of $R$, we now make a different observation.

We return to the constraint that $|S_j(x_j + 2M_j t)| \leq \del_0$, which places the argument of $\phi(S_j(x_j + 2M_j t))$ sufficiently close to the origin. The issue we encountered above is that even if $x_j$ is small, a large value of $S_j$ places $S_jx_j$ far from the origin, so we must choose $t$ to cancel or nearly cancel this.  If instead $S_j=1$ and $|x_j| \leq \del_0/2$ then we only need $t \leq \del_0/(4M_j)$ for the constraint $|x_j + 2M_jt| \leq \del_0$ to be satisfied. This inspires us to take a hybrid approach: we will let $S_1=R^\sig$ for some $\sig \leq 1/2$ to be chosen later, and $t$ will be precisely constrained by $x_1$, but for $j=2,\ldots, n$, we will set $S_j=1$ so that $t$ is not precisely constrained by $x_j$.
To be concrete, we can choose 
\[ c_1 < \del_0/2 \leq 1/2, \qquad c_2 < 1/2\]
sufficiently small (depending on $c_0,  \phi$) such that the following holds:  fix $S_1=R^\sig$ for some $\sig \leq 1/2$ and $M_1=R$ and let $M_2,\ldots, M_n \in [R,2R)$ and assume that $x \in [-c_1,c_1]^n$ lies in a  small neighborhood of the origin. Choose  $t$ such that 
\beq\label{t_choice}
t=-x_1/(2R) + \tau \qquad \text{ with $|\tau| \leq c_2/S_1R$,}
\eeq
 in which case we also have  (by choosing $c_1,c_2$ appropriately small) that $|2 S_1 R \tau| \leq \del_0$ and
 \beq\label{t_fact1}
 t \leq c_0/(4 \|\hat{\phi}\|_{L^1} S_1^2), \qquad \text{and $\qquad t \leq \del_0/(8R) \leq \del_0/(4M_j)$ for each $j=2,\ldots,n$.}
 \eeq
We make one final restriction to ensure that $t \in (0,1)$: we require that $x_1  \in (-c_1,-c_1/2]$. 
Then we will have $t \in (0,1)$ as long as $c_1/2R + c_2/S_1R <1 $ and $c_1/(4R) > c_2/(S_1R)$, which will occur for all sufficiently large $R$, say $R \geq R_2=R_2(n,\phi,\sig)$.

The discussion above shows that with these constraints,
\[ \frac{ |( e^{it \Del}f)(x) | }{\|f\|_{L^2}} \geq  S_1^{1/2}|e(M \cdot x + |M|^2 t) |(1-c_0)^n = R^{\sig/2} (1-c_0)^n. \]
So far this is unsatisfactory, as it only shows (\ref{goal_tilde_f}) holds for $s < \sig/2$, which is no better than $s < 1/4$ (upon recalling $\sig \leq 1/2$). This only recovers the necessity of $s \geq 1/4$ for pointwise convergence of (\ref{ptwise}).
In order to improve on this, we take up our earlier point that we may want to construct $f$ as a sum of a finite number of terms like (\ref{f_try1}) in order to take advantage of number-theoretic properties of exponential sums $\sum_{M}e(M \cdot x + |M|^2 t)$ as $M$ ranges over a finite set of integral tuples.

\section{Overview of our goals: arithmetic behavior}\label{sec_overview}
In this section, we define our choice of the function $f$ according to generic parameters and give an overview of the arithmetic we will exploit. 
We will write $x = (x_1,\ldots,x_n) = (x_1,x')$ and set $\Phi_{n-1}(x') = \prod_{j=2}^n \phi(x_j)$. 
We now define $f=f_R$ by
\beq\label{f_try2}
 f(x) = \phi(S_1 x_1) e(Rx_1)  \Phi_{n-1}(x') \sum_{\bstack{m' \in \Z^{n-1}}{R/L \leq m_j < 2R/L}} e(Lm' \cdot x').
\eeq
Here $1 \leq L \leq R$ is an unspecified parameter, which we will choose later; notice that each coordinate of $Lm'$ satisfies $R \leq Lm_j < 2R$, so each $Lm_j$ can play the role of $M_j$ in the discussion of the previous section.
This choice of $f$ has Fourier transform contained in $B_1(R,S_1) \times [R-1,2R+1]^{n-1}$.
 As mentioned above, there exists  $R_1=R_1(n,\sig)$ such that  this will lie in the  annulus $A_n(R,4\sqrt{n})$ for all $R \geq R_1$, since
\beq\label{sig}
S_1 = R^\sig \quad \text{for some $0 \leq \sig \leq 1/2$}.
\eeq

We compute that for $f$ as defined above,
\begin{multline}\label{op_f}
 ( e^{it \Del} f)(x) =\frac{1}{2\pi}\int_{\R} \hat{\phi}(\lam)e((R + \lam S_1)x_1 + (R + \lam S_1)^2 t) d\lam 
	\\\times \frac{1}{(2\pi)^{n-1}} \int_{\R^{n-1}}  \hat{\Phi}_{n-1}(\xi')  \sum_{\bstack{m' \in \Z^{n-1}}{R/L \leq m_j < 2R/L}}  e( (\xi' + Lm')\cdot x'
	  + | \xi' + Lm' |^2 t)  d \xi'.
	  \end{multline}
We now give an overview of how we will show that this is large, in the sense of (\ref{Bou''}).
Define for each $u  \leq  2R/L$,
\beq\label{S_dfn}
S(x',t; u):= \sum_{\bstack{m' \in \Z^{n-1}}{R/L \leq m_j < u}}    e(Lm'\cdot x' + L^2|m' |^2 t).
\eeq	  
Motivated by Section \ref{sec_basic}, we will focus on a set of $x$ such that for each $x$ there are values of $t$ for which we can perform an approximation argument to remove the quadratic behavior in $\lam$ and $\xi'$ in (\ref{op_f}), and then use the fact that $\phi(0)=1$ in order to show that, up to certain error terms, (\ref{op_f}) is 	 controlled by $S(x',t;2R/L)$.
Our goal is then to estimate the magnitude of $S(x',t;2R/L)$ from below, and the magnitude of the error terms from above.
We recall that each integral and sum will factor into a 1-dimensional version. When bounding the error terms, it is useful to define 
\beq\label{Wt_dfn}
 W(t)  := \sup_{v \in [0,2\pi]} \left|\sum_{R/L \leq m < 2R/L}  e(vm + L^2m^2 t)   \right| ,
 \eeq
where $W$ stands for ``Weyl sum.''
In order to understand what a satisfactory upper bound for $W(t)$ will be, we first need to gain  an understanding of a lower bound for $S(x',t;2R/L)$. Here we will need to understand how $x'$ and $t$ are approximated by rationals, and then we will aim to reduce to a ``complete exponential sum,'' which we can evaluate precisely. In order to orient ourselves, we now review the key arithmetic facts that  underpin the entire argument, before turning to a rigorous analysis in Section \ref{sec_reduce}.

So far we have restricted $x$ to a small neighborhood in $[-c_1,c_1]^n$ and chosen $t$ to lie in a certain neighborhood as in (\ref{t_choice}), with the remaining flexibility to choose $\tau$.  
We may further regard $x$ modulo $2\pi$, so that upon rescaling and defining
\beq\label{xy}
 s := L^2 \tau, \qquad y_1 := -\frac{L^2}{2R}x_1 \modd{2\pi} , \qquad y_j := Lx_j \modd{2\pi}, j=2,\ldots, n
 \eeq
we have $y \in [0,2\pi]^n \simeq \mathbb{T}^n$   and we may write $S(x',t;2R/L)$ as a product over $j=2,\ldots, n$ of the 1-dimensional sums
 \beq\label{sum_j} 
\sum_{R/L \leq m_j < 2R/L} e( m_j y_j + m_j^2(y_1 + s) ) .
\eeq
Here we note that the highest-order coefficient (and thus the most interesting) is $y_1 +s$.
 
Now we will further restrict our choice of $x$ by restricting $y$ to a certain set $\Omega \subset \mathbb{T}^n$, which we will later define precisely by taking appropriately small neighborhoods around a collection of rational points (scaled by $2\pi$). Suppose for the moment that $y_1$ is well-approximated by $2\pi a_1/q$ and $y'$ is well-approximated by $2\pi a'/q,$ where $a'/q= (a_2/q,\ldots, a_n/q)$. 
Here it is natural to assume that 
\beq\label{RL_assumption}
R/L \geq q,
\eeq
as we will later ensure through our choice of $L$, so that the range of summation in (\ref{sum_j}) contains at least one complete set of residues modulo $q$.
Given $x$ (or correspondingly $y$), we will then  choose $t$ (and thereby $\tau$ and its corresponding $s$) so that 
\beq\label{y_choice}
y_1 +s= 2\pi \frac{a_1}{q} .
\eeq
 Then we will replace $y'$ by $2\pi a'/q$ by an approximation argument, so that we may shift our attention (up to an error we will show is acceptable)  from (\ref{sum_j}) to the sum
\beq\label{sum_j_a/q} 
\sum_{R/L \leq m_j < 2R/L} e( 2\pi m_j \frac{a_j}{q} +  2\pi m_j^2 \frac{a_1}{q}) .
\eeq
In order to provide a lower bound for this sum, we will break it into complete quadratic Gauss sums (up to an acceptable error). For any $a,b \in \Z$ we define the Gauss sum
\beq\label{sum_j_a/q_q} 
G(a,b;q)=\sum_{m \modd{q}} e( 2\pi m \frac{b}{q} +  2\pi m^2 \frac{a}{q} ).
\eeq
We can   evaluate this complete exponential sum precisely:
\begin{lemma}[Gauss sum]\label{lemma_Gauss}
For any  $a \in \Z$ with $(a,q)=1$ and any $b \in \Z$,
\begin{enumerate}
\item $|G(a,b;q)| = q^{1/2}$, if $q$ is odd,
\item $G(a,b;q)=0$ if $q \con 2 \modd{4}$ and $b$ is even, or $q \con 0 \modd{4}$ and $b$ is odd,
\item $|G(a,b; q)| = (2q)^{1/2}$, if $q \con 2 \modd{4}$ and $b$ is odd, or $q \con 0 \modd{4}$ and $b$ is even.
\end{enumerate}
\end{lemma}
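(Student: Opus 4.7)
The plan is to compute $|G(a,b;q)|^2$ directly rather than $G(a,b;q)$ itself, since only the modulus is needed. Expanding the square and writing $m = n+k$ diagonalizes the sum in $n$:
\[
|G(a,b;q)|^2 = \sum_{k,n \bmod q} e\!\left(\frac{2\pi}{q}\bigl(a(k^2 + 2nk) + bk\bigr)\right) = \sum_{k \bmod q} e\!\left(\frac{2\pi(ak^2+bk)}{q}\right) \sum_{n \bmod q} e\!\left(\frac{2\pi \cdot 2akn}{q}\right).
\]
By orthogonality of additive characters, the inner sum is $q$ when $q \mid 2ak$ and $0$ otherwise. Everything thus reduces to the concrete divisibility question: for which $k \in \{0,1,\ldots,q-1\}$ does $q$ divide $2ak$, and what phase does each such $k$ contribute?

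Under the hypothesis $(a,q)=1$, the answer splits cleanly by the $2$-adic valuation of $q$. If $q$ is odd then $\gcd(2a,q) = 1$, so only $k=0$ survives, giving $|G(a,b;q)|^2 = q$ and case (1). If $q$ is even then $a$ must be odd, so $q \mid 2ak$ if and only if $(q/2) \mid k$, leaving $k=0$ and $k=q/2$. The $k=0$ term contributes $q$, and the $k=q/2$ term contributes
\[
q \cdot e\!\left(\frac{2\pi}{q}\bigl(a(q/2)^2 + b(q/2)\bigr)\right) = q \cdot (-1)^{b + aq/2}.
\]

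The proof is then finished by a short parity analysis in two subcases. When $q \equiv 2 \pmod 4$, the integer $q/2$ is odd and $a$ is odd, so $aq/2$ is odd, and the total $|G|^2 = q\bigl(1 + (-1)^{b+1}\bigr)$ is $0$ for even $b$ and $2q$ for odd $b$. When $q \equiv 0 \pmod 4$, the integer $q/2$ is even, so $aq/2$ is even regardless of $a$, and the total $|G|^2 = q(1 + (-1)^b)$ is $2q$ for even $b$ and $0$ for odd $b$. Taking square roots yields cases (2) and (3). There is no real obstacle: the $|G|^2$ trick converts the oscillatory sum into an elementary divisibility computation, and the only care required is the bookkeeping of parities of $b$ and $q/2$. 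I note in passing that for odd $q$ one could alternatively complete the square via the substitution $m \mapsto m - \overline{2a}\,b$ (valid since $2a$ is invertible mod $q$) to reduce directly to a pure quadratic Gauss sum, but the unified $|G|^2$ approach handles all three cases at once.
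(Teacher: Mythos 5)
Your proof is correct and follows essentially the same route as the paper: both compute $|G(a,b;q)|^2$ via the $S\overline{S}$ differencing trick, use orthogonality of the inner linear sum to isolate the contributions $k=0$ (the paper's $\ell=q$) and $k=q/2$, and then finish with a parity analysis of $q/2$ and $b$. The only cosmetic difference is that the paper first uses $a$ odd to replace $(-1)^{aq/2}$ by $(-1)^{q/2}$, while you carry $aq/2$ through and do the parity check at the end; the two bookkeeping styles are equivalent.
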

We provide a proof of this classical fact  in Appendix B.
We see that (\ref{sum_j_a/q}) is a sum of $\left\lfloor R/(Lq) \right\rfloor$ copies of $G(a_1,a_j;q)$, plus a possible incomplete sum of length $<q$, of the form
\beq\label{inc_Gauss}
\tilde{G}(u,u') := \sum_{u \leq m_j \leq u'} e( 2\pi m_j \frac{a_j}{q} +  2\pi m_j^2 \frac{a_1}{q} ),
\eeq
for some $1 \leq u \leq u'  \leq q$ with $u'-u<q$. 
Hence (at least in the nonzero cases of Lemma \ref{lemma_Gauss}),  the sum (\ref{sum_j_a/q}) is proportional in absolute value to
\beq\label{main_outcome}
 \left\lfloor  \frac{R}{Lq}  \right\rfloor q^{1/2} + E_j,
\eeq
in which 
\[ |E_j| \leq  \sup_{\bstack{1 \leq u \leq u' \leq q}{u'-u<q}} |\tilde{G}(u,u')|.\]

To bound $|\tilde{G}(u,u')|$ from above, we will apply another classical result, the quadratic case of the Weyl bound:

\begin{lemma}[Weyl bound]\label{lemma_Weyl}
 Suppose that $f(x) = \al x^2 + \be x$ is a real-valued polynomial with $\al$ such that 
\[ \left|\al - \frac{a}{q}\right| \leq \frac{1}{q^2},\]
where $(a,q)=1$. Then there exists a constant $C_0$ independent of $f,a,q,M,N$ such that 
\[\left| \sum_{M \leq n < M+N} e^{2\pi i f(n)} \right|\leq C_0 \left(\frac{N}{q^{1/2}}  + q^{1/2}\right) (\log q)^{1/2}.\]

\end{lemma}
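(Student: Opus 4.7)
The plan is to prove this by Weyl differencing, squaring the exponential sum to reduce from a quadratic phase to a linear one, then bounding the resulting sum of geometric series via the Diophantine approximation hypothesis.

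First, I would set $S = \sum_{M \leq n < M+N} e^{2\pi i f(n)}$ and write
\[
|S|^2 = \sum_{h = -(N-1)}^{N-1} \sum_{n \in I(h)} e^{2\pi i (f(n+h) - f(n))},
\]
where $I(h)$ is an interval of length at most $N$. The quadratic terms in $f(n+h) - f(n)$ cancel and we are left with the linear phase $2\alpha h n + (\alpha h^2 + \beta h)$. The inner sum in $n$ is therefore geometric with ratio $e^{2 \pi i \cdot 2\alpha h}$, and the standard bound for a geometric progression gives modulus at most $\min(N, \tfrac{1}{2\|2\alpha h\|})$, where $\|\cdot\|$ denotes distance to the nearest integer. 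Separating the $h = 0$ contribution yields
\[
|S|^2 \leq N + 2 \sum_{h=1}^{N-1} \min\!\bigl(N, \tfrac{1}{2\|2\alpha h\|}\bigr).
\]

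Second, I would estimate the sum $T := \sum_{h=1}^{N-1} \min(N, 1/\|2\alpha h\|)$ using the hypothesis $|\alpha - a/q| \leq 1/q^2$. The standard device is to partition the range $1 \leq h \leq N$ into $\lceil N/q \rceil + 1$ blocks of $q$ consecutive integers. Within a single block, $|2\alpha h - 2ah/q| \leq 2h/q^2 \leq 2/q$, so the fractional parts $\{2\alpha h\}$ cluster near the $q$ distinct points $\{2ah/q\}$ (using $(a,q)=1$; a factor-of-two adjustment handles the parity of $q$, giving denominator $q$ or $q/2$). It follows that among $h$ in one block, at most $O(1)$ values give $\|2\alpha h\| \leq 1/q$ (contributing $\leq N$ each), while for each $k=1, 2, \ldots, \lfloor q/2 \rfloor$ there are $O(1)$ values of $h$ with $\|2\alpha h\| \asymp k/q$, contributing $O(q/k)$. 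Summing gives $O(N + q\log q)$ per block, and hence
\[
T \ll \bigl(\tfrac{N}{q} + 1\bigr)\bigl(N + q \log q\bigr) \ll \tfrac{N^2 \log q}{q} + N \log q + q \log q,
\]
where I absorbed the leading $N$ into $N\log q$ assuming $q \geq 2$.

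Finally, I combine to conclude $|S|^2 \ll (\tfrac{N^2}{q} + N + q)\log q$, which by AM-GM is $\ll (N q^{-1/2} + q^{1/2})^2 \log q$, and extracting the square root yields the claimed bound. The main obstacle is the second step: carefully justifying that the $q$ fractional parts $\{2\alpha h\}$ in a block of length $q$ are separated by roughly $1/q$ and hence that the sum $\sum_{k}q/k$ really captures their contribution; this requires attention to the parity of $q$ when passing from $\alpha$ to $2\alpha$, but the denominator remains comparable to $q$ and no essential loss occurs. All remaining steps are routine once this counting estimate is in hand.
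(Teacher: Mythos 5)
Your overall strategy --- Weyl differencing to reduce to a linear phase, then a Diophantine analysis of $\sum_h \min(N, \|2\alpha h\|^{-1})$ --- is exactly the paper's, but the arguments are organized differently and your version has a genuine slip in the blocking step. The paper first proves the bound for $N \leq q/4$, so that every $h$ in the differenced sum satisfies $h < 2N \leq q/2$; then $q \nmid h$ forces $\|ah/q\| \geq 1/q$, while $|\alpha h - ah/q| \leq 2N/q^2 \leq 1/(2q)$ gives $\|\alpha h\| \geq \tfrac12 \|ah/q\|$, and the $O(q\log q)$ bound follows from a single pass over the residues $ah \bmod q$. The case $N > q/4$ is then handled by splitting the range of the \emph{summation variable} $n$ (not $h$) into $O(N/q + 1)$ blocks of length $\leq q/4$ and applying the short-range bound to each. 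You instead keep $N$ arbitrary and partition $h$ into blocks of length $q$.

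The slip: you write ``within a single block, $|2\alpha h - 2ah/q| \leq 2h/q^2 \leq 2/q$,'' but the last inequality needs $h \leq q$, so it is valid only in the first block. For $h$ in the $j$-th block ($jq < h \leq (j+1)q$), the perturbation $2h/q^2$ can be as large as roughly $2(j+1)/q$, which for $N \gg q$ is far larger than $1/q$; the claim that $\{2\alpha h\}$ clusters near $\{2ah/q\}$ is simply false there. The standard repair is to write $h = jq + r$ with $1 \leq r \leq q$ and observe $2\alpha h = 2\alpha jq + 2ar/q + 2r(\alpha - a/q)$; only the last term (which is $O(1/q)$ since $r \leq q$) is a perturbation, and the points $2\alpha jq + 2ar/q \pmod 1$ are equally spaced with gap $1/q$ (or $2/q$ when $q$ is even) regardless of the unknown shift $2\alpha jq$, so the counting argument survives. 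Your final bound $T \ll (N/q + 1)(N + q\log q)$ is correct, but the reasoning as written would be wrong as soon as $N > q$. The paper's organization avoids ever having to confront the shift.
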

We   provide a proof of this classical  fact  in Appendix B.
In particular,  Lemma \ref{lemma_Weyl} shows that 
\[ \sup_{1 \leq u  \leq u' \leq q} |\tilde{G}(u,u')| \leq 2C_0q^{1/2} (\log q)^{1/2}.\]
Thus as long as we choose $L,q$ such that $  \left\lfloor R/(Lq) \right\rfloor$ is sufficiently large relative to $2C_0(\log q)^{1/2}$ for all sufficiently large $q$, say 
\beq\label{delta_motivation}
R/L \geq q^{1+ \Del_0}
\eeq
for some $\Del_0>0$,
the main term in   (\ref{main_outcome}) will dominate the error term, and
will provide a lower bound  that  is proportionate in absolute value to
\beq\label{main_term}
\frac{R}{Lq^{1/2}}.
\eeq
Since $S(x',t;2R/L)$ is a product of $n-1$ sums of the form (\ref{sum_j}), we thus expect this procedure will produce
a lower bound for $|S(x',t;2R/L)|$ that is proportionate to
\beq\label{main_term_prod}
\left( \frac{R}{Lq^{1/2}} \right)^{n-1}.
\eeq

Importantly, once we have this goal in mind, it establishes an acceptable upper bound  for all the error terms we encounter in approximation arguments. We will also find the Weyl bound of Lemma \ref{lemma_Weyl} useful in bounding $W(t)$ from above.
In particular, by the definition of $t, \tau, s, y_1$,  $W(t)$ as defined in (\ref{Wt_dfn}) can also be written as
\beq\label{W_dfn2}
 W(t)  = \sup_{v \in [0,2\pi]} \left|\sum_{R/L \leq m < 2R/L}  e(vm + m^2 (y_1 +s))   \right| .\eeq
Note that the Weyl bound is uniform in the linear coefficient of the phase polynomial. 
Recalling from the above sketch that given $x$ (or correspondingly $y$), we will then  choose $t$ (and thereby $\tau$) so that (\ref{y_choice}) holds, 
we may apply the Weyl bound to see that 
\beq\label{W_bound}
 W(t) \leq  C_0\left( \frac{R}{Lq^{1/2}} + q^{1/2}\right) (\log q)^{1/2} \leq 2C_0 \frac{R}{Lq^{1/2}} (\log q)^{1/2},
 \eeq
where we have in the last inequality applied our  assumption (\ref{RL_assumption}). 
This bound for $W(t)$ is roughly comparable in size to the main term in (\ref{main_term}). At first glance this appears dissatisfying, since we need the main term in (\ref{main_outcome}) to be an order of magnitude larger than all error terms. But the crucial fact is that $W(t)$ will   appear accompanied by a factor $|t|$ (due to differentiation occurring in integration by parts). The small magnitude of $|t|$ will play a critical role, in combination with (\ref{W_bound}), to control  error terms.

 \subsection{Computing the $L^2$ norm $\|f\|_{L^2}$}
We conclude this section with the simple computation of the $L^2$ norm of $f$, which we will use as a normalizing factor in the inequality (\ref{Bou''}).
We recall the definition of $f$ in (\ref{f_try2}); by Plancherel's theorem, it is equivalent to compute $\|\hat{f}\|_{L^2}$, 
where 
\[ \hat{f}(\xi_1,\xi') = \sum_{\bstack{m' \in \Z^{n-1}}{R/L \leq m_j < 2R/L}} g_{m'}(\xi_1,\xi'),\]
in which 
\[ g_{m'}(\xi_1,\xi') = \frac{1}{S_1} \hat{\phi} \left(\frac{\xi_1 - R}{S_1}\right)  \hat{\Phi}_{n-1}(\xi' - Lm') .
\]
If we let $\Bcal$ denote the box $[-S_1,S_1] \times [-1,1]^{n-1}$, then $g_{m'}$ is supported in 
the shifted box $\Bcal  + (R,Lm')$. In particular, as long as $L \geq 4$, say (which we will later ensure), as $m'$ varies over tuples in $\Z^{n-1}$, any two distinct tuples $m' \neq m''$ have the property that the supports of $g_{m'}$ and $g_{m''}$ are disjoint. Thus
\[ \|\hat{f} \|_{L^2}^2  =  \sum_{\bstack{m' \in \Z^{n-1}}{R/L \leq m_j < 2R/L}} \| g_{m'} \|^2_{L^2}.\]
Thus upon computing that 
$ \| g_{m'} \|_{L^2} = S_1^{-1/2} \| \hat{\Phi}_n \|_{L^2} =  (2\pi)^{n/2} S_1^{-1/2} \|\phi\|_{L^2}^n$, we see that 
\beq\label{f_norm_computation}
\|f\|_{L^2}= (2\pi)^{-n/2} \|\hat{f} \|_{L^2} =   S_1^{-1/2} (R/L)^{\frac{n-1}{2}}  \| \phi \|^n_{L^2}.
 \eeq
 We will use this in our final verification of (\ref{Bou''}).

\subsection{Organization of the rigorous argument}
Having sketched an overview of our plan, we now carry it out rigorously. 
In Section \ref{sec_reduce} we show how to pass from $(e^{it\Del }f)(x)$   to the sum $S(x',t;2R/L)$, up to certain error terms. 
In Section \ref{sec_omega} we define the sets $\Omega$ and $\Omega^*$ that allow us to exploit arithmetic in $S(x',t;2R/L)$, and we compute the measure of these sets. 
In Section  \ref{sec_evaluate} we evaluate $S(x',t;2R/L)$ to compute a main term.
In Section \ref{sec_parameters} we bound all  the error terms accumulated  and assemble all the assumptions we have made so far about the relationships of the parameters. We then make optimal parameter choices and complete the proof of Bourgain's criterion, in the form of (\ref{Bou''}).

\section{Reducing to arithmetic behavior}\label{sec_reduce}

In this section, we show that for $f$ defined in (\ref{f_try2}), in the neighborhood of $x$ we consider, and for   $t$  satisfying the requirements of (\ref{t_choice}) and (\ref{t_fact1}),
\beq\label{conseq1}
 |(e^{it \Del} f)(x) |   \geq (1-c_0)^n \left|  S(x',t;2R/L) \right| -( |E(1)| + |E(2)|) 
\eeq
in which the error terms satisfy upper bounds given in (\ref{E1_bound}) and (\ref{E2_bound}), respectively. This makes the ideas outlined in Section \ref{sec_overview} rigorous.

 At this point we note that we may start with a choice of $c_0$ as small as we like, and while this determines an upper bound on $\del_0 = \del_0(c_0)$, we may also choose $\del_0$ smaller if we wish. Thus for now we suppose that 
 \beq\label{c_del}
  c_0 \leq c_0^* = c_0^*(n,\phi), \qquad   \del_0 \leq \del_0^* = \del_0^*(n,\phi), 
   \eeq
 and at the end of the paper we will see what to impose as upper bounds on $c_0^*, \del_0^*$, depending only on $n, \phi.$

\subsection{The integral over $\lam$}
We first show that in absolute value, the contribution of the integral over $\lam$ in (\ref{op_f})  has magnitude at least $1 - c_0$.
By definition, this contribution is equal to 
\[ e(Rx_1 + R^2 t)\cdot \frac{1}{2\pi}  \int_{\R} \hat{\phi}(\lam) e(\lam( S_1 x_1 + 2 RS_1 t))   e(S_1^2 t\lam^2) d \lam . \]
By Lemma \ref{lemma_int_approx}, this expression is equal to 
\[
 e(Rx_1 + R^2 t) e(S_1^2t) \phi(S_1(x_1 + 2Rt))  + E_1 
\]
in which $|E_1| \leq c_0/2$ by the property (\ref{t_fact1}) of $t$.
Furthermore  by the choice of $t$ in (\ref{t_choice}) and (\ref{t_fact1}) we know that   
\[\phi(S_1(x_1 + 2Rt)) =1 + E_1'\] with $|E_1'| \leq c_0/2$.
Thus in (\ref{op_f}) the integral over $\lam$ is equal to 
\beq\label{lam_end_goal}
 e(Rx_1 + R^2 t) e(S_1^2t)  + E_1'',
 \eeq with 
  $|E_1''| \leq c_0$. This proves our claim.

\subsection{The integral over $\xi'$}
We now show that the integral over $\xi'$ in (\ref{op_f}) evaluates to $S(x',t;2R/L)$, up to  error terms $E(1)$ and $E(2)$.
The integral over $\xi'$ is equal to 
\beq\label{int_xi'}
\frac{1}{(2\pi)^{n-1}}   \int_{\R^{n-1}}  \hat{\Phi}_{n-1}(\xi')  \sum_{\bstack{m' \in \Z^{n-1}}{R/L \leq m_j < 2R/L}}  e(Lm'\cdot x' + L^2|m' |^2 t) e(  \xi' \cdot (x'+2Lm't )) 
	  e( | \xi' |^2   t)  d \xi'.
	  \eeq
The key step is to show that this is equal to 
\beq\label{step1_xi'}
 e(t)^{n-1}  \Phi_{n-1}(x' + (R',\ldots, R')t) S(x',t;2R/L) + E(1) + E(2)
 \eeq
	 in which $|E(1)|$ and $|E(2)|$ are bounded by (\ref{E1_bound}) and (\ref{E2_bound}), respectively.
	 Here we have defined $R' = 2L ( \lceil 2R/L \rceil -1)$. (This notation will only be relevant for this section, and the only fact we will use about it is that $R' \leq 4R$.)
Once we have shown this, we simply note that by our choice of $t$ we have $|t| \leq \del_0/(8R)$ and so certainly $R'|t| \leq \del_0/2$; hence for each $j$ we have 	$\phi(x_j + R' t)  \geq 1 - c_0/2 \geq 1-c_0$, and hence $|\Phi_{n-1}(x' + (R',\ldots, R')t)| \geq (1-c_0)^{n-1}$.
	Assembling this result for the integral over $\xi'$ in (\ref{op_f}) with the result (\ref{lam_end_goal}) for the integral over $\lam$, we can conclude that  (\ref{conseq1}) holds, as soon as we have proved (\ref{step1_xi'}).

Our first step in proving (\ref{step1_xi'}) is to approximate (\ref{int_xi'}) so as to remove the factor $e( | \xi' |^2   t)$, and then we can use Fourier inversion to reveal   $ \Phi_{n-1}(x' + 2Lm't)$. The second step is to pull this factor out of the sum over $m'$ by a second approximation argument, thus isolating the exponential sum $S(x',t;2R/L)$.

\subsubsection{Removal of the quadratic phase}
 In the first step, we apply  Lemma \ref{lemma_int_approx} to remove the quadratic factor $e(|\xi'|^2t)$, in order to show 
 that (\ref{int_xi'}) is equal to 
 \beq\label{int_xi'_2}
  e(t)^{n-1}    \sum_{\bstack{m' \in \Z^{n-1}}{R/L \leq m_j < 2R/L}}  \Phi_{n-1}(x' + 2Lm't)  e(Lm'\cdot x' + L^2|m' |^2 t) + E(1).
  \eeq
To carry this out, we factor  (\ref{int_xi'}), recalling that $\hat{\phi}$ is supported in $[-1,1]$, and apply Lemma \ref{lemma_int_approx} to the $\xi_j$-th integral, with $h(\xi_j) = \xi_j^2 t$ so that $\|h'\|_{L^\infty[-1,1]} \leq 2|t|$. We obtain
\begin{multline}\label{j_int}
 e(  t) \frac{1}{2\pi} \int_{-1}^1  \hat{\phi} (\xi_j)  \sum_{R/L \leq m_j < 2R/L}  e(Lm_j x_j + L^2m_j^2 t) e(  \xi_j ( x_j + 2Lm_jt))
	   d \xi_j + E_j
	  \\ 
	  = e(  t)    \sum_{R/L \leq m_j < 2R/L}  \phi(x_j +2Lm_jt) e(Lm_j x_j + L^2m_j^2 t) + E_j,
	  \end{multline}
with 
$
 |E_j| \leq (4/2\pi) |t| \| \mu \|_{L^1[-1,1]} \leq  |t| \| \mu \|_{L^1[-1,1]},$ in 
 which $\mu$ is the integrand in (\ref{j_int}). Using the function 
  $W(t)$ as defined in (\ref{Wt_dfn}), we see that 
  \beq\label{Ej_est}
 |E_j|    \leq   |t|   W(t)\| \hat{\phi} \|_{L^1}.
	  \eeq
Of course the main term  on the left-hand side of (\ref{j_int}) can be   bounded above by 
\beq\label{full_int_est}
 W(t)  \|\hat{\phi}\|_{L^1}.
 \eeq
 Consequently, when we multiply together the expressions (\ref{j_int}) for $j=2,\ldots, n$, we see that the full integral over $\xi'$ given in (\ref{int_xi'}) is equal to (\ref{int_xi'_2}), in which 
the error term $E(1)$  is the sum of all possible cross terms, as $\ell$ varies from $0$ to $n-2$, with $\ell$ factors bounded by (\ref{full_int_est}) and the remaining $(n-1-\ell)$ factors of the form $E_j$ and bounded by (\ref{Ej_est}). 
The largest such terms occur for $\ell=n-2$, when there is only one factor of the small term $|t|$.	Thus we record the bound
	\beq\label{E1_bound}
	|E(1)| \leq C_1 \|\hat{\phi}\|_{L^1}^{n-1} W(t)^{n-1} |t|,
	\eeq
	for a constant $C_1 = C_1(n)$.
	We will later show that for appropriate choices of $x,t$, since $t$ is chosen to be small as in (\ref{t_choice}), $|E(1)|$ will be sufficiently small relative to $|S(x',t;2R/L)|$.

\subsubsection{Isolation of the exponential sum}
We turn our focus to the sum in (\ref{int_xi'_2}). We assume that $x',t$ are fixed. We would like to approximate $\Phi_{n-1}(x' + 2Lm't)$ by $1$, but we cannot do this uniformly in $m'$, and thus we must first remove the factor $\Phi_{n-1}(x' + 2Lm't)$ from the sum over $m'$.
We again work one dimension at a time. Our tool is partial summation, which shows that a sum of complex numbers $a_n$ weighted by a $C^1$ weight $h(n)$ can be well-approximated by the sum of $a_n$ alone, as long as the derivative of $h$ is sufficiently small. 
\begin{lemma}[Partial summation]\label{lemma_partial_sum}
Suppose $a_n$ is a sequence of complex numbers and $h$ is a $C^1$ function on $\R$. Upon setting $A(u) = \sum_{M\leq n \leq u} a_n$, then
\[ \sum_{n=M}^{M+N}a_n h(n)  = A(M+N)h(M+N) - \int_M^{M+N} A(u) h'(u) du.\]
\end{lemma}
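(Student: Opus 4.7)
The plan is to prove the identity by the standard Abel summation trick: express each value $h(n)$ in terms of $h(M+N)$ using the fundamental theorem of calculus, then interchange the resulting finite sum with the integral. This is simply the continuous-weight form of discrete partial summation, and nothing in the hypotheses demands more care than classical manipulations.

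Concretely, since $h \in C^1(\R)$, for each integer $n$ with $M \le n \le M+N$ one has
\[ h(n) = h(M+N) - \int_n^{M+N} h'(u)\, du. \]
Multiplying by $a_n$, summing over $n \in [M,M+N] \cap \Z$, and pulling the constant $h(M+N)$ out of the first piece yields
\[ \sum_{n=M}^{M+N} a_n h(n) = A(M+N)h(M+N) - \sum_{n=M}^{M+N} a_n \int_n^{M+N} h'(u)\, du, \]
using only the definition $A(M+N) = \sum_{n=M}^{M+N} a_n$. For the remaining double object, I would rewrite $\int_n^{M+N} h'(u)\, du = \int_M^{M+N} \mathbf{1}_{\{u \ge n\}} h'(u)\, du$ and invoke Fubini on $\{M,M+1,\dots,M+N\} \times [M,M+N]$, which is trivially justified since the sum is finite and $h'$ is continuous (hence bounded) on the compact interval. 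The constraint $u \ge n$ then collects precisely the integers $n$ with $M \le n \le u$, so the inner sum becomes $\sum_{M \le n \le u} a_n = A(u)$, giving
\[ \sum_{n=M}^{M+N} a_n \int_n^{M+N} h'(u)\, du = \int_M^{M+N} A(u) h'(u)\, du. \]
Substituting this into the previous display produces the stated formula.

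There is no substantive obstacle: this is a classical identity and the only item to keep straight is the endpoint convention, namely that $A(M+N)$ is the full sum so the boundary term emerges cleanly, and that $A$ is the piecewise-constant function whose jump at each integer $n \in [M,M+N]$ equals $a_n$, which is exactly what makes the Fubini rearrangement assemble $A(u)$ correctly for every $u \in [M,M+N]$.
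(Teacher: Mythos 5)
Your proof is correct and matches the paper's argument essentially step for step: both use the fundamental theorem of calculus to write $h(M+N)-h(n)=\int_n^{M+N}h'(u)\,du$, multiply by $a_n$, sum, and interchange sum and integral to assemble $A(u)$. The extra remarks about Fubini and endpoint conventions are fine but not substantively different from the paper's one-line derivation.
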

\begin{proof}
It suffices to observe that
\[\sum_{n=M}^{M+N} a_n ( h(M+N) - h(n)) = \sum_{n=M}^{M+N} a_n \int_{n}^{M+N} h'(u) du = \int_{M}^{M+N} (\sum_{n=M}^u a_n )h'(u)du.\]
\end{proof}
For each $j=2,\ldots, n$, define $S_j(u)$ for any $R/L \leq u \leq 2R/L$  by
\beq\label{Wt_dfn_tilde}
S_j(u) := \sum_{R/L \leq m_j < u}   e(L m_j x_j+ L^2m_j^2 t) .
\eeq
Note that this depends on $x_j,t$ as well, which are fixed for the present discussion.
Apply  Lemma \ref{lemma_partial_sum} to the $m_j$-coordinate sum that is a factor in (\ref{int_xi'_2}) to see that 
\beq\label{sum_j_partial}
 \sum_{R/L \leq m_j < 2R/L} \phi (x_j + 2Lm_jt)  e(Lm_j x_j + L^2m_j^2 t)
 	\\
	 = \phi (x_j + R't) S_j(2R/L)   + E_j(2),
\eeq
in which we recall the notation $R' = 2L ( \lceil 2R/L \rceil -1)$, and the error term is
\[ E_j(2)   =- \int_{R/L}^{2R/L} \left( \sum_{R/L \leq m_j < u}   e(Lm_j  x_j + L^2m_j^2 t) \right) (2Lt )\phi'(x_j + 2Lut) du.\]
We may bound $|E_j(2)|$ by
\beq\label{Ej_est'}
 |E_j(2)| \leq  (R/L) 2L |t|   \| \phi' \|_{L^\infty} \sup_{R/L \leq u \leq 2R/L} |S_j(u)|.
 \eeq
We also note that the main term on the right-hand side of (\ref{sum_j_partial}) can be bounded by 
\beq\label{full_int_est'}
\| \phi \|_{L^\infty} |S_j(2R/L)|.
\eeq
We now multiply together the expressions (\ref{sum_j_partial}) for $j=2,\ldots, n$ to see that 
\[ 
 \sum_{\bstack{m' \in \Z^{n-1}}{R/L \leq m_j < 2R/L}}  \Phi_{n-1}(x' + 2Lm't)  e(Lm'\cdot x' + L^2|m' |^2 t) 
 	 =   \Phi_{n-1}(x' + (R',\ldots, R')t) S(x',t;2R/L) + E(2)
\]
in which $E(2)$ results from all possible cross terms, as $\ell$ varies from 0 to $n-2$, with $\ell$ factors bounded by (\ref{full_int_est'}), and the remaining $(n-1-\ell)$ factors of the form $E_j(2)$ and bounded by (\ref{Ej_est'}). 
Precisely, 
\beq\label{E2_bound}
|E(2)| \leq \sum_{\ell=0}^{n-2} C_\ell \left( \| \phi \|_{L^\infty} \cdot \sup_{2 \leq j \leq n}|S_j(2R/L)| \right)^\ell \left(2R|t|   \| \phi' \|_{L^\infty} \sup_{2 \leq j \leq n} \sup_{R/L \leq u \leq 2R/L} |S_j(u)| \right)^{n-1-\ell},
\eeq
for some combinatorial constants $C_\ell$.
This proves the claim (\ref{conseq1}), and completes the technical work of this section.

\begin{remark}
Recalling   the discussion of the previous section, we may anticipate that for each $x'$ we consider, we will choose $t$ appropriately so that $|S_j(u)|$ is proportional to $\lfloor (u-R/L)/q \rfloor q^{1/2}$ (up to an error term of size $2C_0 q^{1/2} (\log q)^{1/2}$) for all $u \leq 2R/L$ and for each $2 \leq j \leq n$.   
Later, we will choose $q$ to lie in a range $4\mu_0 Q \leq q \leq 4Q$ for a  constant $0<\mu_0<1$, and a parameter $Q$ that is a small power of $R$, to be chosen at the end of the argument. This, combined with the assumption that $R/L\geq Q^{1+ \Del_0}$ for a small parameter $\Del_0$ to be chosen later,   will allow us in (\ref{crude_Sju}) to bound the contribution of $|S_j(u)|$ by at most a multiple of $R/(LQ^{1/2})$, uniformly for $R/L \leq u \leq 2R/L$. 

Once we have verified this,  
the largest contribution to $E(2)$ comes from the term $\ell=n-2$, leading to a bound of the form
\beq\label{E2_bound_wish}
 |E(2)| \leq C_2 R |t|  (\|\phi\|_{L^\infty} + \| \phi' \|_{L^\infty})^{n-1}\left( \frac{R}{LQ^{1/2}} \right)^{n-1}
\eeq
for some other constant $C_2$ depending on $n, \Del_0$. Such an upper bound will be sufficient, relative to the main term in (\ref{conseq1}) (proportional to (\ref{main_term_prod})), due to the presence of the factor $R|t| \leq \del_0/8 $ (see (\ref{t_choice})),  as long as we take $\del_0$ to be sufficiently small relative to $c_0,C_2,n, \|\phi\|_{L^\infty}, \| \phi' \|_{L^\infty}$.
Since we cannot prove (\ref{E2_bound_wish}) rigorously  until we have chosen the set of $x,t$ we consider, for the moment we record (\ref{E2_bound}) as our upper bound for $|E(2)|$, and return to prove (\ref{E2_bound_wish}) later.
\end{remark}

\section{Construction of the sets $\Omega$ and $\Omega^*$}\label{sec_omega}
Our starting point in this section is the key result (\ref{conseq1}) of the previous section.
So far we have restricted to a small neighborhood of $x$ in  $[-c_1,c_1]^n$, and we have chosen $t$ and accordingly $\tau$ so that (\ref{t_choice}) and (\ref{t_fact1}) hold.
From these, we correspondingly define the variables $s, y_1,y'$  as in (\ref{xy}).
Our goal in this section is to construct a set $\Omega$, comprised of small neighborhoods of $2\pi a_j/q$ for certain rationals $a_j/q$ with $q$ of about size $Q$, for a parameter $Q$ to be chosen later in terms of $R$. This set $\Omega$ will have the property  that for any $x$ such that the corresponding $y$ lies in $\Omega$, we can choose $t$ so that the behavior of $S(x',t;2R/L)$ is dominated by Gauss sums, which we then evaluate precisely in Section \ref{sec_evaluate}. 

It is natural to assume that we choose $Q$ such that 
\beq\label{RLQ}
 \frac{R}{L} \geq Q \eeq
so that for each integer $q$ of about size $Q$, any $R/L$ consecutive integers contain at least $q$ consecutive integers. 
But in fact we recall from  our motivating discussion in (\ref{delta_motivation}) (which we will make precise momentarily) that we need $R/L$ to be a bit larger, and thus we now formally assume that for some small $0 < \Del_0 \leq 1$ to be chosen later, 
\beq\label{RLQ_try2}
 \frac{R}{L} \geq Q^{1+\Del_0}.
\eeq 
 We also make the weak assumption that $Q$ grows like some power of $R$ (so in particular, for all $R$ sufficiently large, $Q$ is at least as large as any absolute constant, such as $(4\pi)^n$). We will write this as an assumption that $L=o(R)$ and there exists some $\ep_1>0$ such that 
 \beq\label{RLQ_try3}
 Q \geq \left(\frac{R}{L} \right)^{\ep_1}.  
\eeq
 (In the discussion below, we can proceed from first principles with the weaker assumption (\ref{RLQ}) until equation (\ref{enlarge_E4}) below, at which point the stronger assumption (\ref{RLQ_try2}) allows us to consolidate error terms into what we call $E(3)$ below.) 
 
 \subsection{The three key properties of the set $\Omega$}\label{sec_3_ppties}
We now state the key properties of the set $\Omega$ in the form of a claim with three parts. The motivation for the assumptions in (\ref{UV_assps}) on the relative sizes of $L, R, S_1, Q$ will become clear momentarily. 

Fix any $Q \geq 1$ satisfying (\ref{RLQ_try2}) and (\ref{RLQ_try3}) for $\Del_0$ and $\ep_1$. Assume $L =o(R)$ and $R = o(L^2)$. Fix $\mu_0 = (4\pi)^{-n}$. 
Assume
 \beq\label{UV_assps}
 \frac{1}{Q} \leq \frac{L^2}{S_1R}, \qquad \frac{\pi}{(\mu_0Q)Q^{1/(n-1)}} \leq C_3'\left(\frac{R}{L} \right)^{-1},
 \eeq
 for some $C_3' = C_3'(n)$.
 Fix
  any small absolute constants $c_3 \leq \min\{c_2, 1/2\pi\}$, $c_4< 1/2$. Then there exists a set $\Omega \subset \mathbb{T}^n\simeq [0,2\pi]^n$, and a set  $\Omega^* \subset [-c_1,-c_1/2] \times [-c_1,c_1]^{n-1}$, such that for each $x \in \Omega^*$  the corresponding $y  = (y_1,y')$ as defined by (\ref{xy}) belongs to $\Omega$, and such that the sets $\Omega$ and $\Omega^*$ defined using $c_3,c_4$ have the following properties.

{\bf Property (I):} For  every  $x=(x_1,x') \in \Omega^*,$   there exists a $t \in (0,1)$ satisfying the conditions  (\ref{t_choice}) and (\ref{t_fact1}) and an integer $q \in [4\mu_0Q, 4Q]$ such that  
\beq\label{S_E3}
 |S(x',t;2R/L)| = \left( \frac{\sqrt{2}R}{Lq^{1/2}}\right)^{n-1}    + E(3)  ,
 \eeq
in which
\beq\label{E3_E4}
|E(3)| \leq  C_3 \left(  \frac{R}{LQ^{1/2}} \right)^{n-1} (c_4 + (R/L)^{-\Del_0\ep_1 /2})   
\eeq
for some $C_3 = C_3(n, \Del_0,\mu_0)$. 

{\bf Property (II):}  The measure of $\Omega$ satisfies the property that for any $\ep_0>0$, there exists a constant $0<c_{\ep_0}<1$ such that
 \beq\label{Omega_size}
 |\Omega| \geq  c_{\ep_0} c_3 c_4^{n-1}3^{-(n-1)}2^{-n}  \mu_0  Q^{-\ep_0} .
 \eeq

{\bf Property (III):}  In measure $|\Omega^*| \geq c_1' |\Omega|$, with a constant $c_1'$ depending only on $c_1,n$.

We now carefully motivate how one would construct $\Omega$ from first principles, before formally defining a more complicated, rigorous, version, which allows us to prove that (I), (II), and (III) hold. (Other constructions are possible, but we tried to use an intuitive approach here.)

\subsection{First informal model for the set $\Omega$}
We now define a first guess for the set $\Omega \subset \mathbb{T}^n$ according to small constants $c_3,c_4$ and small parameters $U,V$, which we will choose momentarily in terms of $Q$. 
We consider a model for $\Omega$ defined by
 \[  \Union_{\bstack{\mu_0 Q \leq q \leq Q}{a_1,a'}} \{ (y_1,y') \in \mathbb{T}^n: |y_1 - \frac{2\pi a_1}{q}| < c_3 U, |y_j - \frac{2\pi a_j}{q}| < c_4V, \; j=2,\ldots,n\} \]
 in which the union denotes that $q$ runs over integers in the range $[\mu_0 Q,Q]$, $a_1$ runs over the residues $1 \leq a_1 \leq q$ with $(a_1,q)=1$, and $a' = (a_2,\ldots, a_n) \in \Z^{n-1}$ runs over all residues, $1 \leq a_j \leq q$. 
 We will later modify this model into a formal definition of $\Omega$, after determining  appropriate choices of $U,V$.
 
A reasonable initial hope is to choose $U,V$ so that $|\Omega|$ is at least a positive constant, independent of $R$; this encourages us to choose $U,V$ large.    On the other hand, we need $U,V$ to be small enough that the approximations of the $y_j$ are sufficiently accurate for partial summation to succeed in passing from (\ref{sum_j}) to (\ref{sum_j_a/q}) without accumulating large errors. Note that no advantage is gained by taking $U,V$ any larger than $Q^{-1}$, since we are approximating by denominators of size approximately $Q$.

\subsection{Choosing $t$ to avoid approximations in the quadratic term: upper bound for $U$}
 In order to determine how we must reasonably choose $U$, we recall that for each fixed $x \in \Omega^*$,  we are allowed to choose  $t = -x_1/(2R) + \tau$ for any  $|\tau| \leq c_2/(S_1R)$; i.e. by (\ref{xy}) we are allowed to choose any $s = L^2 \tau$ with $|s| \leq c_2 L^2/(S_1R)$. This motivates us to require $c_3 \leq c_2$ and 
\[
U \leq \frac{L^2}{S_1R}.
\]
With these choice for $c_3$ and $U$, the set $\Omega$ has the following property: given any $x \in \Omega^*$ and the corresponding $y_1$ in an interval centered at $2\pi a_1/q$, there exists $s$  such that 
\beq\label{ys_choice}
y_1 + s = 2\pi a_1/q,
\eeq
 and $|s| \leq c_2L^2/(S_1R)$. Upon choosing this $s$ the corresponding $t,\tau$ satisfy the usual requirement (\ref{t_choice}).
Conveniently, this ability to \emph{choose} $s$ (or equivalently, to choose $\tau$) avoids an approximation  to obtain a rational coefficient for the quadratic term in the exponential sums.

\subsection{Approximations in the linear term: upper bound for  $V$}
In contrast, for $j=2,\ldots, n$, to pass from $y_j$ to $2\pi a_j/q$ inside the linear term in the exponential sum (\ref{sum_j}), we will require an approximation lemma, which will force an upper bound on $V$.
 Given a real-valued function $f$, we temporarily  use the notation that 
\[ S(f;M,N) = \sum_{n=M}^{M+N} e(f(n)).\]
The content of the following lemma is that given a real-valued function $h$, $|S(f+h;M,N)|$ is   proportional to $|S(f;M,N)|$  if the derivative of $h$ is sufficiently small.
\begin{lemma}[Partial summation II]\label{lemma_sum_approx}
Let $f,h$ be real-valued functions of $\R$ and in addition assume that $h$ is $C^1$.
Then 
\[ S(f+h;M,N) = S(f;M,N)e(h(M+N)) + E\]
where 
\[ |E| \leq  \sup_{u \in [0,N]} |S(f;M,u)| \cdot \| h' \|_{L^\infty[M,M+N]} \cdot N.\]
\end{lemma}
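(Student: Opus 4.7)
The plan is to mimic the Abel/partial summation argument that already appears in the proof of Lemma~\ref{lemma_partial_sum}, but with the complex-valued weight $n \mapsto e(h(n))$ in place of a real weight. First I would rewrite
\[
S(f+h;M,N) - S(f;M,N) e(h(M+N)) = \sum_{n=M}^{M+N} e(f(n))\bigl(e(h(n)) - e(h(M+N))\bigr),
\]
which is the natural quantity to bound, isolating the constant phase $e(h(M+N))$ that we pull out of $S(f;M,N)$.

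Next I would apply the fundamental theorem of calculus to the bracketed difference. Since $\frac{d}{du} e(h(u)) = i h'(u) e(h(u))$ under the convention $e(x) = e^{ix}$,
\[
e(h(n)) - e(h(M+N)) = -i \int_{n}^{M+N} h'(u) e(h(u))\, du.
\]
Plugging this into the sum above and swapping the (finite) sum with the integral gives
\[
E = -i \int_{M}^{M+N} \Bigl(\sum_{M \leq n \leq u} e(f(n))\Bigr) h'(u) e(h(u))\, du.
\]
The inner sum is exactly $S(f;M,u-M)$ with $u-M \in [0,N]$, so I would finish by taking absolute values inside the integral, bounding $|e(h(u))| = 1$, and estimating the integrand crudely by $\sup_{v \in [0,N]} |S(f;M,v)| \cdot \|h'\|_{L^\infty[M,M+N]}$. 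The resulting integral over an interval of length $N$ then yields the stated bound.

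I do not anticipate a genuine obstacle: the argument is a direct adaptation of Lemma~\ref{lemma_partial_sum}, the only new ingredient being that the derivative of the complex weight $e(h(u))$ contributes an extra factor of $i h'(u)$ rather than simply $h'(u)$. This factor has modulus $|h'(u)|$, which is why $\|h'\|_{L^\infty}$ appears in the error term in the same way as in the real-valued case.
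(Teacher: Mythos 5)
Your proof is correct and is essentially the paper's own argument, just unrolled: the paper's one-line proof applies Lemma~\ref{lemma_partial_sum} with $a_n = e(f(n))$ and the (complex-valued) weight $u \mapsto e(h(u))$, whose derivative $ih'(u)e(h(u))$ has modulus $|h'(u)|$, and you have simply re-derived that identity directly via the fundamental theorem of calculus and interchange of sum and integral. (One small correction: Lemma~\ref{lemma_partial_sum} as stated does not restrict to real-valued weights, so the paper can and does invoke it verbatim; no adaptation is needed.)
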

\begin{proof}
It suffices to observe that by Lemma \ref{lemma_partial_sum},
 \[S(f+h;M,N) = S(f;M,N)e(h(M+N)) - \int_M^{M+N}S(f;M,u-M) h'(u)e(h(u)) du.\]
 \end{proof}
Thus as a general principle, to conclude that $ |S(f+h;M,N)| \geq (1- \al_0) |S(f;M,N)|$ for a certain constant $\al_0<1$, 
 it suffices  to bound $|S(f;M,u)| $ by an increasing function in $u$, so that $\sup_{u \in [0,N]} |S(f;M,u)|\leq |S(f;M,N)|$, and to show $ \| h' \|_{L^\infty[M,M+N]}  \leq \al_0 N^{-1}$.
 
 Recall that $S(x',t; 2R/L)$ is a product of sums of the form $S_j(u)$ defined in (\ref{Wt_dfn_tilde}).
We now record a result for $S_j(u)$ that holds for any $R/L \leq u \leq 2R/L$. This will show us what an acceptable size will be for error terms when replacing $y_j$ by $2\pi a_j/q$, and hence indicate an upper bound on $V$.

 We apply Lemma \ref{lemma_sum_approx} to the sum $S_j(u)$ by setting 
 \[f(m_j) = m_j (2\pi a_j/q) + m_j^2 (y_1 + s), \qquad h(m_j) = m_j(y_j - 2\pi a_j/q).\]
 We also define for any $R/L \leq u \leq 2R/L$, 
 \[ \tilde{S}_j(u): =  \sum_{R/L \leq m_j < u} e( m_j (2\pi a_j/q)+ m_j^2(y_1 + s) ) .\]
 The derivative  satisfies $|h'(m_j)| \leq c_4 V$, and thus Lemma \ref{lemma_sum_approx} shows that  
 for any $R/L \leq u \leq 2R/L$, 
 \beq\label{sum_j_2}
S_j(u)= \tilde{S}_j(u) + E_j(u;3)
  \eeq
 where
\beq\label{sum_est_1_0}
 |E_j(u;3)| \leq c_4   V u  \sup_{0\leq w \leq u} |\tilde{S}_j(w)|.
 \eeq
It remains to bound $|\tilde{S}_j(w)|$ for each $0 \leq w \leq u$, but we will see in Section \ref{sec_main_term1} that it is bounded by an increasing function in $w$, so that 
$\sup_{0 \leq w \leq u} |\tilde{S}_j(w)| \leq |\tilde{S}_j(u)|$. Once we have established this, in order to conclude that  in particular for $u = 2R/L$, $|E_j(2R/L;3)|$ is at most a small positive proportion of the expected main term $|\tilde{S}_j(2R/L)|$, (\ref{sum_est_1_0}) shows us that we must at least ensure that 
\[V \leq C_3'(R/L)^{-1},\]
for some $C_3' = C_3'(n)$. 
Then we can take $c_4$ appropriately small relative to $C_3'$ and other absolute constants.  (This choice of $V$ agrees with $V \leq Q^{-1},$ under (\ref{RLQ}).) This concludes our motivation for upper bounds for $U,V$. We turn to motivations for lower bounds.

\subsection{Conclusions about $U,V$ relative to $Q$}
 
 The computations above indicate different considerations for $U$ and $V$ and thus it is natural to consider the volume of $\Omega$ as a $1 \times (n-1)$-dimensional computation.
In aiming to cover a positive measure of $[0,2\pi]^{n-1}$, it is natural to think of the principle of simultaneous Dirichlet approximation. 
Simultaneous Dirichlet approximation in $n-1$ dimensions   shows that for every $Q \geq 1$, every point $(y_2,\ldots, y_n)$  in $[0,1]^{n-1}$ can 
be approximated by $  (a_2/q,\ldots, a_n/q)$ for some uniform denominator $1 \leq q \leq Q$ with accuracy
\[ |y_j -   a_j/q| \leq \frac{1}{q Q^{1/(n-1)}}, \qquad 2 \leq j \leq n.\]
We provide a proof of this classical result in Appendix B.
In general the lengths of these intervals cannot be shortened by an order of magnitude and still yield boxes that cover $[0,1]^{n-1}$. 
 Thus in order for $\Omega$ to have a chance of covering a positive proportion of $[0,2\pi]^{n-1}$ in its last $n-1$ coordinates,
 we require that $V \geq (q\cdot Q^{1/(n-1)})^{-1}$ for each $q$ we consider. Taking $V$ larger than this will not increase the measure of $\Omega$ by an order of magnitude, and so in our formal definition in the next section, we are motivated to choose $V$  proportional to 
 \[V \approx ((\min q) Q^{1/(n-1)})^{-1}.\] 
(Here, temporarily, we let $\max q$ and $\min q$ denote the maximum and minimum of those denominators we will consider.)

 In order for $\Omega$ to have a chance of covering a positive proportion of $[0,2\pi]$ in its first coordinate, we would then need to 
 have $U$ at least proportional to $q^{-1}$; taking $U$ larger than this would not increase the measure of $\Omega$ by an order of magnitude. We also would like the intervals around $2\pi a_1/q$ to be disjoint at $a_1$ varies,  and thus we set
 \[U = (\max q)^{-1}. 
 \]
 
\subsection{Formal definition of the set $\Omega$}
We now formally define the set $\Omega$. Under the assumptions (\ref{UV_assps}), for small $c_3 \leq \min\{c_2,1/2\pi\}$, $c_4<1/2$ of our choice, and $\mu_0 = (4\pi )^{-n}$, we define $\Omega$ to be the set
\begin{multline*}
  \Union_{\bstack{4\mu_0 Q \leq q \leq 4Q}{ q \con 0 \modd{4}}} \Union_{\bstack{1 \leq a_1 \leq q}{(a_1,q)=1}} \Union_{\bstack{2 \leq a_2, \ldots, a_n \leq 2q}{a_j \con 0 \modd{2}}} \{ (y_1,y') \in [0,2\pi]^n: |y_1 - \frac{2\pi a_1}{q}| < \pi c_3 (4Q)^{-1},  
  \\
   |y_j - \frac{2\pi a_j}{q}| < \pi c_4 ((\mu_0 Q) Q^{1/(n-1)})^{-1}, \; j=2,\ldots,n \}.
\end{multline*}
The conditions that $q \con 0 \modd{4}$ and  $a_j \con 0 \modd{2}$ for $j=2,\ldots, n$ assure that the Gauss sum $G(a_1,a_j;q)$ will fall into  a nonzero  case of Lemma \ref{lemma_Gauss}. 
We now verify properties (II) and (III) according to this definition. We will prove property (I) in Section \ref{sec_evaluate}.
Note that in this construction, $V = \pi((\mu_0 Q) Q^{1/(n-1)})^{-1} \leq C_3'(R/L)^{-1}$ under assumption (\ref{UV_assps}).

 \subsection{Property (II): Volume of $\Omega$}
 
  Property (II) will follow from two facts, which we now prove:
 
 (IIa) Fix any $\ep_0>0$, and $0<c_3< 1/2\pi$. There exists a constant $0<c_{\ep_0}<1$ such that for each   $4\mu_0 Q \leq q \leq 4Q$, as $a_1$ varies over $1 \leq a_1 \leq q$ with $(a_1,q)=1$, the intervals centered at $2\pi a_1/q$ of length $2\pi c_3(4Q)^{-1}$ cover  a subset of $[0,2\pi]$ of measure at least $  c_3c_{\ep_0} \mu_0 Q^{-\ep_0}$. 
 
 (IIb) As long as $\mu_0 \leq (4\pi)^{-n}$, 
 
 \[|  \Union_{\bstack{4\mu_0 Q \leq q \leq 4Q}{ q \con 0 \modd{4}}}  \Union_{\bstack{2 \leq a_2, \ldots, a_n \leq 2q}{a_j \con 0 \modd{2}}}  \{ y' \in \mathbb{T}^{n-1}:   |y_j - \frac{2\pi a_j}{q}| < \pi c_4 ((\mu_0 Q) Q^{1/(n-1)})^{-1} \}| \geq c_4^{n-1}3^{-(n-1)}2^{-n}.\]
 In particular,  (IIb)  informs us which denominators $q$ to pick for the boxes in the last $n-1$ coordinates to cover a positive measure subset of $[0,2\pi]^{n-1}$, and for each such $q$, property (IIa) guarantees a lower bound on the measure covered in the first coordinate. In total, this verifies (\ref{Omega_size}).

\subsubsection{Proof of (IIa)} 
Since $[0,1]$ is covered by intervals centered at $a_1/q$ of length $1/q$ with $1 \leq a_1 \leq q$, $[0,2\pi]$ is covered by intervals centered at $2\pi a_1/q$ of length $2\pi/q$. 
Recall that for any integer $q$, there are $\varphi(q)$ residues relatively prime to $q$, where $\varphi(q) = q\prod_{p|q}(1-1/p)$ is the Euler totient function. In particular,  $(1/2)^{\om(q)} q \leq \varphi(q) \leq q$ for any $q$, where $\om(q)$ denotes the number of distinct prime factors of $q$. There is an absolute constant $c_5$ such that $\om(q) \leq c_5  \log q/ \log \log q$ for all integers $q$ \cite[\S22.10]{HarWri08}. Thus for any $\ep_0>0$ there exists a constant $0<c_{\ep_0}'<1$ such that   $2^{-\om(q)} \geq c_{\ep_0}' q^{-\ep_0}$ for all $q \geq 1$.
Thus for each $4\mu_0 Q\leq q \leq 4Q$, a union of $\phi(q)$ many disjoint intervals of length $2\pi c_3(4Q)^{-1}$ as described in (IIa)  covers a set of measure at least $  c_{\ep_0}'(4Q)^{-\ep_0} (4\mu_0 Q) \cdot 2\pi c_3 (4Q)^{-1} \geq c_{\ep_0} \mu_0 c_3Q^{-\ep_0}$.
 
\subsubsection{Proof of (IIb)}  
This argument uses simultaneous Dirichlet approximation in $n-1$ dimensions, followed by rescaling to ensure the congruence conditions in case (3) of Lemma \ref{lemma_Gauss}. 

The first task is to show we still obtain a positive proportion of the measure if we restrict from $1 \leq q \leq Q$ to a range of $q$ proportional to $Q$.
Let $J(q;a_2,\ldots, a_n)$ denote the product over $j=2,\ldots, n$ of the intervals centered at $ 2\pi a_j/q$ of length $2 \cdot 2\pi (qQ^{1/(n-1)})^{-1}$. By simultaneous Dirichlet approximation in $n-1$ dimensions (rescaled to $[0,2\pi]$), every element in $[0,2\pi]^{n-1}$ lies in at least one such box. Thus
 \[ 
 | \Union_{1 \leq q \leq Q} \Union_{1 \leq a_2, \ldots, a_n \leq q} J(q;a_2,\ldots, a_n) | \geq  (2\pi)^{n-1}\geq 1.
 \]
We next claim that if $\mu_0 \leq (4\pi)^{-n}$ then
 \[ 
|M(\mu_0)| :=  |\Union_{1 \leq q< \mu_0 Q} \Union_{1 \leq a_2, \ldots, a_n \leq q} J(q;a_2,\ldots, a_n) | \leq 1/2.
  \]
We compute an upper bound as follows:
  \begin{align*}
  |M(\mu_0) | &= \int_{[0,1]^{n-1}} \mathbf{1}_{M(\mu_0)} (u) du \\
  	& \leq 	 \int_{[0,1]^{n-1} }
		 \sum_{1 \leq q < \mu_0 Q } \sum_{1 \leq a_2, \ldots, a_n \leq q} \mathbf{1}_{J(q;a_2,\ldots, a_n)}   (u) du	 	\\
	& \leq   \sum_{1 \leq q < \mu_0 Q} \sum_{1 \leq a_2, \ldots, a_n \leq q} (\frac{4\pi}{qQ^{1/(n-1)}})^{n-1}
		\leq (4\pi)^{n-1} Q^{-1} \sum_{1 \leq q < \mu_0 Q}    1
		\\
		& \leq (4\pi)^{n-1} \mu_0  \leq 1/2,
  \end{align*}
  under the assumption $\mu_0 \leq (4\pi)^{-n}$.
Consequently the restricted union of $J(q;a_2,\ldots, a_n)$ over $\mu_0 Q \leq q \leq Q$, $1 \leq a_2, \ldots, a_n \leq q$  has measure at least $1/2$. 

We now rescale each cube by a small constant $0<c_4<1$ of our choice, which we need to ensure certain error terms are small (see (\ref{E3_E4})). Let  $J^*(q;a_2,\ldots, a_n)$ be defined as $J(q;a_2,\ldots, a_n)$ but according to intervals of length $4\pi c_4 (qQ^{1/(n-1)})^{-1}$: thus $J^*(q;a_2,\ldots, a_n)$ is a cube of side-length $4\pi c_4 (qQ^{1/(n-1)})^{-1}$ centered at $(2\pi a_2/q,\ldots, 2\pi a_n/q)$. For each cube, the measure rescales as $|J^*(q;a_2,\ldots, a_n)|=c_4^{n-1}|J(q;a_2,\ldots, a_n)|$. We claim that therefore  
 \begin{align}
 | \Union_{\mu_0 Q \leq q \leq Q} \Union_{1 \leq a_2, \ldots, a_n \leq q} J^*(q;a_2,\ldots, a_n) |
 & \geq 3^{-(n-1)} c_4^{n-1}| \Union_{\mu_0 Q \leq q \leq Q} \Union_{1 \leq a_2, \ldots, a_n \leq q} J(q;a_2,\ldots, a_n) | \label{JJ}\\
 & \geq 3^{-(n-1)} c_4^{n-1}(1/2). \nonumber
 \end{align}
We assume for the moment that this is true, and verify it in Lemma \ref{lemma_box} below, as a consequence of the Vitali covering lemma.  

We now uniformize the lengths of the intervals. Let $I(q;a_2,\ldots, a_n)$ denote the product over $j=2,\ldots, n$ of the intervals centered at $2\pi a_j/q$ of length $4\pi c_4((\mu_0 Q) Q^{1/(n-1)})^{-1}$. 
Note that for each $\mu_0 Q \leq q \leq Q$, and $a_2,\ldots, a_n$, the box $I(q;a_2,\ldots, a_n)$ contains $J^*(q;a_2,\ldots, a_n)$. Thus 
\[ | \Union_{\mu_0 Q \leq q \leq Q} \Union_{1 \leq a_2,\ldots, a_n \leq q}  I(q;a_2,\ldots, a_n) | \geq c_4^{n-1}3^{-(n-1)}(1/2).\]

We now rescale the set defined by the union on the left-hand side in order to achieve the congruence conditions on $q, a_j$. Let $\mathcal{I}$ denote the union on the left-hand side. Every point $y' \in \mathcal{I}$ has a choice of $\mu_0 Q \leq q \leq Q$ and $1 \leq a_2,\ldots, a_n \leq q$ such that  $|y_j - 2\pi a_j/q| \leq 2\pi c_4 ((\mu_0 Q)Q^{1/(n-1)})^{-1}$ for $j=2,\ldots, n$. 
Thus by rescaling by a factor of 2, every $y' \in \mathcal{I}$ has a choice of $\mu_0 Q \leq q \leq Q$ and $1 \leq a_2,\ldots, a_n \leq q$ such that $|y_j/2 - 2\pi a_j/2q| \leq \pi c_4 ((\mu_0 Q)Q^{1/(n-1)})^{-1}$ for $j=2,\ldots, n$. Next we rewrite $2\pi a_j/2q = 2\pi (2a_j)/(4q)$, and set $q' = 4q$ and $a_j' = 2a_j$. Let $\mathcal{I}'$ denote the set $\mathcal{I}$ rescaled by $1/2$ in every coordinate, so that 
\[ |\mathcal{I}'| \geq 2^{-(n-1)} |\mathcal{I}| \geq c_4^{n-1}3^{-(n-1)}2^{-n}.\]
We can conclude that every $y' \in \mathcal{I}'$ has a choice of $4\mu_0 Q \leq q' \leq 4Q$ with $q' \con 0 \modd{4}$, and $2 \leq a_2',\ldots, a_n' \leq 2q$ with each $a_j' \con 0 \modd{2}$, such that $|y_j' - 2\pi a_j'/q'| \leq \pi c_4 ((\mu_0 Q)Q^{1/(n-1)})^{-1}$ for $j=2,\ldots, n$. 

All that remains to complete the proof of property (IIb) is a final lemma, which suffices to verify (\ref{JJ}).
\begin{lemma}\label{lemma_box}
Let $\{B_j\}_{j \in J}$ be a finite collection of cubes in $\R^m$. Fix a constant $0<c<1$ and for each $j$ let $B_j^*$ be the cube with the same center but with each side-length rescaled by $c$. Then 
\[| \Union_{j \in J} B_j^* | \geq c^{m} 3^{-m}|\Union_{j \in J} B_j|.\]
\end{lemma}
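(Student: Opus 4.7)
The plan is to apply a standard Vitali-type selection argument. Let me briefly outline the steps before presenting the main obstacle.

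First, I would produce a disjoint subcollection via greedy selection. Order the cubes $\{B_j\}_{j \in J}$ by decreasing side-length and iterate: at each stage pick the remaining cube of maximal side-length, then discard all other remaining cubes that intersect it. This yields a disjoint subcollection $\{B_{j_k}\}_{k \in K}$. The key property is that every discarded cube $B_j$ intersects some selected $B_{j_k}$ that was chosen earlier, hence with side-length at least that of $B_j$. A simple geometric check (two cubes that intersect, with one having side-length at least as large as the other) shows that in this situation $B_j \subset 3 B_{j_k}$, where $3B_{j_k}$ denotes the cube concentric with $B_{j_k}$ with side-length scaled by $3$. Consequently
\[
\Union_{j \in J} B_j \subset \Union_{k \in K} 3 B_{j_k},
\]
which yields
\[
\left|\Union_{j \in J} B_j\right| \leq \sum_{k \in K} |3 B_{j_k}| = 3^m \sum_{k \in K} |B_{j_k}|.
\]

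Second, I would exploit disjointness to bound the union of the rescaled cubes from below. Since $B_{j_k}^*$ is contained in $B_{j_k}$, the subcollection $\{B_{j_k}^*\}_{k \in K}$ is also pairwise disjoint. Thus
\[
\left|\Union_{j \in J} B_j^*\right| \geq \sum_{k \in K} |B_{j_k}^*| = c^m \sum_{k \in K} |B_{j_k}|.
\]
Combining the two displays gives
\[
\left|\Union_{j \in J} B_j^*\right| \geq c^m \sum_{k \in K} |B_{j_k}| \geq c^m \cdot 3^{-m} \left|\Union_{j \in J} B_j\right|,
\]
as claimed.

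The only real obstacle is verifying the geometric containment $B_j \subset 3 B_{j_k}$ when $B_j \cap B_{j_k} \neq \emptyset$ and the side-length of $B_{j_k}$ is at least that of $B_j$. This is straightforward: if $B_j$ has side-length $r \leq r_k$ and is centered at $x$, and $B_{j_k}$ is centered at $x_k$ with side-length $r_k$, then $\|x - x_k\|_\infty \leq (r + r_k)/2 \leq r_k$, so every point of $B_j$ lies within $\ell^\infty$-distance $r + r_k/2 \leq 3 r_k /2$ of $x_k$, placing it in $3 B_{j_k}$. Everything else is bookkeeping, and since the collection $\{B_j\}$ is finite, no measurability or exhaustion subtleties arise.
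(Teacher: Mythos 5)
Your proof is correct and follows essentially the same route as the paper: reduce to a disjoint subcollection with the Vitali property, then use disjointness together with $B_{j}^* \subset B_{j}$ to pass the $c^m$ factor through. The only difference is that the paper invokes the Vitali covering lemma as a black box (citing Stein), whereas you re-derive the finite version of it via the greedy largest-cube selection and the $3$-fold dilation argument; this is more self-contained but is the standard proof of the lemma the paper cites.
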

\begin{proof}
For each $j$, $|B_j^*| = c^m |B_j|$. If the union $\union_j B_j$ is disjoint, then $| \Union_{j \in J} B_j^* | = c^{m}  |\Union_{j \in J} B_j|$. Otherwise, by the Vitali covering lemma \cite[Ch. I \S 3.1 Lemma 1]{SteinHA}, there exists a disjoint subcollection  $\{B_{j_i}\}_{j_i \in J'}$  of $\{B_j\}_{j \in J}$ such that 
$ | \Union_{j_i \in J'} B_{j_i}| \geq c_m | \Union_{j \in J} B_j|,$
where we may take $c_m=3^{-m}$. 
Then the lemma holds, since we can apply the case for disjoint collections:
\[ | \Union_{j \in J} B_j^* | \geq | \Union_{j_i \in J'} B_{j_i}^* | = c^m | \Union_{j_i \in J'} B_{j_i}| \geq c^m 3^{-m} | \Union_{j \in J} B_j|.\]
\end{proof}

 \subsection{Property (III): volume of $\Omega^*$}
We now prove property (III) for the measure of $\Omega^*$. We first consider a 1-dimensional model problem, since we can later work coordinate-by-coordinate. 
Let $c_1>0$ be a small fixed constant and $M>0$ a large real scaling factor (sufficiently large that $Mc_1 > 2\pi$). Let $\iota$ denote the map $\iota : \R \maps \T \simeq [0,2\pi]$ that maps a real number to its image modulo $2\pi$. Given any   set  $S_0 \subset \T$ (in our case a union of intervals), we can define by periodicity  a set $S_1 \subset [-Mc_1,Mc_1]$ such that $\iota(S_1) = S_0$, and $S_1$ contains at least $2\lfloor Mc_1/2\pi \rfloor$ shifted copies of $S_0$. In particular, in measure $|S_1| \geq2 \lfloor Mc_1/2\pi \rfloor|S_0|$. Now let $r$ (for rescale) denote the map $r: \R \maps \R$ such that $r(x) = Mx$. Then given such a set $S_1$, we can define a set $S_2 \subset [-c_1,c_1]$ such that $r(S_2) = S_1$, and naturally the measure of $S_2$ is $|S_2| = |S_1|/M$. Composing these two processes, given a set $S_0$ in $[0,2\pi]$ we can construct a set $S_2$ in $\R$ with 
image $\iota \circ r (S_2) = S_0$, such that in measure $|S_2| \geq M^{-1} 2 \lfloor Mc_1/2\pi \rfloor|S_0| \geq (c_1/2\pi) |S_0|$, say.
Note that this lower bound is ultimately independent of $M$.

Similarly, this argument can be adapted to construct a set $S_1 \subset [-Mc_1,-Mc_1/2]$ containing at least $\lfloor Mc_1/(2 \cdot 2\pi)\rfloor$ copies of $S_0$, and then a set $S_2 \subset [-c_1,-c_1/2]$ of measure  $\geq (c_1/8\pi) |S_0|$ such that $\iota \circ r(S_2) = S_0$. 

We apply this coordinate-by-coordinate to $\Omega \subset [0,2\pi]^n$ (which is a union of products of intervals). We use the scaling factor $M=L^2/2R$ in the first coordinate, and $M=L$ in the $j$-th coordinate for $j=2,\ldots, n$.
Here we use the assumptions that $L = o(R)$ and $R = o(L^2)$ so that for all sufficiently large $R$ relative to an absolute constant, $M$ is sufficiently large relative to $c_1$ in each case.  Thus given $\Omega \subset [0,2\pi]^n \simeq \T^n$, we construct a set $\Omega^*\subseteq  [-c_1,-c_1/2] \times [-c_1,c_1]^{n-1}$ with $|\Omega^*| \geq c_1' |\Omega|$, where $c_1'$ is a positive constant depending only on $c_1,n$. 

We have constructed the sets $\Omega$ and $\Omega^*$, and verified properties (II) and (III). Next, we turn to verifying property (I).

 \section{Evaluating the arithmetic contribution}\label{sec_evaluate}
 Our goal in this section is to prove property (I), namely the identity (\ref{S_E3}) with the bound (\ref{E3_E4}) for the error term.
Since the sum $S(x',t;2R/L)$ factors into 1-dimensional sums, it suffices to work one coordinate at a time. 
Suppose that $x = (x_1,x') \in \Omega^*$ and correspondingly $y \in \Omega$, with corresponding $q$. 
Recalling the definition (\ref{Wt_dfn_tilde}), we may equivalently write, for any $ R/L \leq u \leq 2R/L$, the sum
\[ S_j(u)  = \sum_{R/L \leq m_j < u} e( m_j y_j + m_j^2(y_1 + s) ) .\]
Then to prove (I) it suffices to prove that for each $2 \leq j \leq n$, 
\beq\label{single_sum}
S_j(2R/L)= \sum_{R/L \leq m_j < 2R/L} e( m_j y_j + m_j^2(y_1 + s) ) 
	 =  \frac{\sqrt{2}R}{Lq^{1/2}}  + E_j(5)
	 \eeq
	 in which 
	 \beq\label{Ej5}
	  |E_j(5)| \leq C_5(c_4 + Q^{-\Del_0/2})  \frac{R}{LQ^{1/2}}  
	  \eeq
	 for some constant $C_5 = C_5(n,\Del_0,\mu_0)$.
Then to compute $|S(x',t;2R/L)|$ we multiply together (\ref{single_sum}) for $j=2,\ldots, n-1$ to get a main term of size 
$(\sqrt{2} R/Lq^{1/2})^{n-1}$ plus an error term that is of the form 
\[ 
 \sum_{\ell=0}^{n-2}C_\ell \left(  \frac{\sqrt{2} R}{Lq^{1/2}} \right)^{\ell} \left(   C_5 ( c_4 + Q^{-\Del_0/2})  \frac{R}{LQ^{1/2} }  \right)^{n-1-\ell}
\leq 
\left(  \frac{  R}{LQ^{1/2}} \right)^{n-1} \cdot \sum_{\ell=0}^{n-2}C_\ell'    C_5^{n-1-\ell} ( c_4 + Q^{-\Del_0/2})^{n-1-\ell}  ,
\]
for some combinatorial constants $C_\ell$ and  $C_\ell' = C_\ell' (C_\ell, \mu_0,n)$.
Under our assumptions, $(c_4 + Q^{-\Del_0/2})<1$ for all sufficiently large $R$, so that this term contributes the most when $\ell=n-2$.
Upon recalling from (\ref{RLQ_try3}) that $Q \geq (R/L)^{\ep_1}$ for some $\ep_1>0$, this is bounded above by the error term stated in (\ref{E3_E4}), with $C_3$ depending on $C_5,n, \mu_0$.

 \subsection{Proof of property (I)}\label{sec_main_term1}
We now prove (\ref{single_sum}). For future reference when bounding the error term $E(2)$  previously encountered in (\ref{E2_bound}), we furthermore prove a general result for $S_j(u)$ for any $R/L \leq u \leq 2R/L$.
Fix $x = (x_1,x') \in \Omega^*$, the corresponding $y \in \Omega$, the corresponding denominator $q$, and sums $S_j(u)$
for $j=2,\ldots, n$. 
In total, for any $R/L \leq u \leq 2R/L$, we will show
\beq\label{sup_Sj}
  |S_j(u)|  = \frac{\sqrt{2}(u-R/L)}{q^{1/2}}  + E_j(u;3) + E_j(4),
\eeq
where the error terms satisfy the bounds (\ref{Ej3}) and (\ref{enlarge_E4}) below, respectively. 
In  the case $u=2R/L$, (\ref{sup_Sj}) proves (\ref{single_sum}).

 Recall from
(\ref{sum_j_2}) that after approximating $y_j$ by $2\pi a_j/q$ in the sum,
\[ S_j(u)= \tilde{S}_j(u) + E_j(u;3).\]
 So in particular we evaluate $\tilde{S}_j(u)$. Recall $x \in \Omega^*$, with corresponding values of $q \con 0 \modd{4}$ and $(a_1,q)=1$, $a_j \con 0 \modd{2}$ for $j=2,\ldots, n$. 
 For a fixed $R/L \leq u  \leq 2R/L$, with $s$   chosen above in (\ref{ys_choice}), the sum
 \[ \tilde{S}_j(u) =   \sum_{R/L \leq m_j < u} e( m_j (2\pi a_j/q)+ m_j^2(y_1 + s) )  \]
   is equal to 
 $ \left\lfloor (u-R/L)/q \right\rfloor G(a_1,a_j;q)$, plus 
 possibly an incomplete sum of length $< q$.   Lemma \ref{lemma_Gauss} case (3) shows that $|G(a_1,a_j;q)|=\sqrt{2}q^{1/2}$, while the incomplete sum is dominated by
 \beq\label{incomplete_sum}
\sup_{\bstack{1 \leq u \leq u' \leq q}{u'-u<q}}  \left|\sum_{u \leq m_j  \leq  u'}e(2\pi \frac{a_j}{q} m_j + 2\pi \frac{a_1}{q} m_j^2) \right| \leq 2C_0 q^{1/2}  (\log q)^{1/2},
   \eeq
as a consequence of the Weyl bound (Lemma \ref{lemma_Weyl}) with $N=u'-u<q$.
This proves that 
\beq\label{tilde_S_j_id}
 |\tilde{S}_j(u) | =   \left\lfloor \frac{u-R/L}{q}\right\rfloor \sqrt{2}q^{1/2}  + E_j'(4)  = \frac{\sqrt{2}(u-R/L)}{q^{1/2}} + E_j(4), 
 \eeq
 say,
with $|E_j'(4)| \leq 2 C_0 q^{1/2} (\log q)^{1/2}$. In the second identity, we have written $ \left\lfloor (u-R/L)/q \right\rfloor \sqrt{2}q^{1/2}  = \sqrt{2}(u-R/L)/q^{1/2} + O(q^{1/2})$, and we obtain the error term bound
$
 |E_j(4)| \leq (2C_0 +2) q^{1/2} (\log q)^{1/2} .
$
In particular, recall from (\ref{RLQ_try2}) that $R/L \geq Q^{1+ \Del_0}$.
For all  $4\mu_0 Q \leq q \leq 4Q$, 
\[
q^{1/2} (\log q)^{1/2} \leq C_{\Del_0}' q^{1/2  + \Del_0/2}  \leq C_{\Del_0,\mu_0} \frac{Q^{1+ \Del_0}}{Q^{1/2}} Q^{-\Del_0/2}
	\leq C_{\Del_0,\mu_0}' \frac{R}{LQ^{1/2}} Q^{-\Del_0/2}.
\]
Consequently
\beq\label{enlarge_E4}
 |E_j(4)| \leq  C_{\Del_0,\mu_0}'' \frac{R}{LQ^{1/2}} Q^{-\Del_0/2}   .
 \eeq
 
 We now also use this to bound $E_j(u;3)$.
In particular,   we can derive the crude upper bound
\[ \sup_{R/L \leq u \leq 2R/L} |\tilde{S}_j(u) | \leq \frac{\sqrt{2}R}{Lq^{1/2}} + |E_j(4)| \leq C_{\Del_0,\mu_0}''' \frac{R}{LQ^{1/2}} .
\]
We insert this in (\ref{sum_est_1_0}) and recall that $V = \pi ((\mu_0Q)Q^{1/(n-1)})^{-1} \leq C_3'(R/L)^{-1}$ to conclude that 
for any $R/L \leq u \leq 2R/L$,
\beq\label{Ej3}
|E_j(u;3)| \leq c_4 C_{\Del_0,\mu_0}''' C_3'\frac{R}{LQ^{1/2}} 
\eeq
for some constant $C_{\Del_0,\mu_0}''' = C_{\Del_0,\mu_0}'''(n)$.
This verifies (\ref{sup_Sj}) and hence (\ref{single_sum}), and we have completed the proof of property (I).

\subsection{Completing the bound for $E(2)$}
Second, in order to bound $S_j(u)$ as it appears in $E(2)$ in (\ref{E2_bound}), we rewrite (\ref{sup_Sj}) as the cruder upper bound
\beq\label{crude_Sju}
|S_j(u)| \leq C_{\Del_0,\mu_0}'''' \frac{R}{LQ^{1/2}},
\eeq
valid for all $R/L \leq u \leq 2R/L$, with some constant $C_{\Del_0,\mu_0}''''$.  
 We now apply this in (\ref{E2_bound}) to see that (\ref{E2_bound_wish}) holds, as desired, with a constant $C_2 = C_2(n,\Del_0,\mu_0)$.

\section{Final estimates and choice of parameters}\label{sec_parameters}
Our starting point for this section is the key result  of property (I) for $|S(x',t;2R/L)|$ in equation (\ref{S_E3}). We combine this with   the key result for  $|(e^{it\Del}f)(x)|$ in equation (\ref{conseq1}) of \S \ref{sec_reduce} to see that 
for every point $x \in \Omega^*$, there exists a choice of $t \in (0,1)$ and some $4\mu_0 Q \leq q \leq 4Q$ such that
\beq\label{conseq2}
  |(e^{it \Del} f)(x) |   \geq (1-c_0)^n  \left( \frac{\sqrt{2}R}{Lq^{1/2}}\right)^{n-1}  - (|E(1)| + |E(2)|  + |E(3)|).
  \eeq
  In particular, for each $4\mu_0 Q \leq q \leq 4Q$,
  \[ 
  (1-c_0)^n  \left( \frac{\sqrt{2}R}{Lq^{1/2}}\right)^{n-1}  \geq (1-c_0)^n 2^{-(n-1)/2} \left( \frac{ R}{LQ^{1/2}}\right)^{n-1}.
  \]
  
In this section, we will confirm that for each choice of $c_0 \leq c_0^*$ and $\del \leq \del_0^*$, with thresholds $c_0^*, \del_0^*$ specified in (\ref{c_del'}) below, 
there exists an absolute constant $R_0$ depending only on $n, \phi$ (and other  constants that we have chosen in terms of $n,\phi$), such that for all $R  \geq R_0$, we have 
\beq\label{conseq3_0}
 |E(1)| +|E(2)| + |E(3) |\leq (1/2) (1-c_0)^n 2^{-(n-1)/2}   \left( \frac{R}{L Q^{1/2}}\right)^{n-1} ,
 \eeq
so that 
\beq\label{conseq3}
  |(e^{it \Del} f)(x) |   \geq (1/2)(1-c_0)^n 2^{-(n-1)/2}  \left( \frac{R}{L Q^{1/2}}\right)^{n-1}.
  \eeq
After we bound the error terms $E(1),E(2),E(3)$, we will choose the parameters $S_1, L, Q$ appropriately, according to the constraints we have imposed so far.

\subsection{Bounding $E(1),E(2),E(3)$}
It is simple to bound $E(1)$ and $E(2)$, now that we have constructed the set $x\in \Omega^*$ and chosen $t$ for each $x$ accordingly. 
The key is to bound $W(t)$ and $S_j(u)$, as defined in (\ref{Wt_dfn}) and (\ref{Wt_dfn_tilde}), respectively. 
Fix $x \in \Omega^*$, with corresponding $y \in \Omega$ and associated denominator $q$. Recalling that we choose $t$ so that $y_1 + s = 2\pi a_1/q$,   for any $ R/L \leq u < 2R/L$ and  uniformly in $v \in [0,2\pi]$,
\[   \left|\sum_{R/L \leq m \leq u}  e(vm + m^2 (y_1 +s))   \right|  \leq  C_0\left( \frac{R}{Lq^{1/2}} + q^{1/2}\right) (\log q)^{1/2}
	\leq 4 C_{0,\mu_0} \frac{R}{LQ^{1/2}}(\log Q)^{1/2} ,
\]
upon recalling $R/L \geq Q \geq q/4$.
This bound suffices to treat $W(t)$.
We recall the upper bound for $|E(1)|$ in terms of $W(t)$ as stated in (\ref{E1_bound}), which now implies that
\[ |E(1)|   \leq   |t|  C_1 \| \hat{\phi} \|_{L^1}^{n-1}\left(4C_{0,\mu_0}   \frac{R}{LQ^{1/2}}(\log Q)^{1/2} \right)^{n-1}.
      \]

Next we recall the upper bound for $|E(2)|$ stated in (\ref{E2_bound}) in terms of  $S_j(u)$; also recall that we have already  verified that (\ref{E2_bound_wish}) holds.
In conclusion,   we have the upper bound
 \[ |E(1)| + |E(2)| \leq   \left( \frac{R}{LQ^{1/2}}\right)^{n-1} ( C_4R|t| + C_4|t|(\log Q)^{\frac{n-1}{2}})  \]
 where we take $C_4 =C_4(n, \Del_0,\mu_0,\phi)$ depending on our previous constants chosen in terms of these parameters. 
 
 Now we recall from (\ref{t_fact1}) that $|t| \leq \del_0/8R$, so that if $\del_0^*$ is chosen sufficiently small that $\del_0 \leq \del_0^* \leq  c_02^{-(n-1)/2}/C_4$, 
 we certainly have $C_4 R|t| \leq c_0 2^{-(n-1)/2}/8$.
 We also have $Q \leq R/L \leq R$ (as a crude upper bound), so  there exists a constant $R_3$ chosen appropriately large relative to $n,C_4$ such that 
  for all $R \geq R_3$, $C_4   |t| (\log Q)^{\frac{n-1}{2}} \leq C_4   (\del_0/8)R^{-1} (\log R)^{\frac{n-1}{2}}  \leq c_0 2^{-(n-1)/2}/8$. 
In total, under these conditions, we then have
 \beq\label{E1E2}
  |E(1)| + |E(2)| \leq      \left( \frac{R}{LQ^{1/2}} \right)^{n-1} 2^{-(n-1)/2}(c_0/4) .  
  \eeq
  
 We also recall the bound (\ref{E3_E4}) for $|E(3)|$, which holds for some $C_3 = C_3(n, \Del_0,\mu_0)$ and some $\ep_1>0$ as in (\ref{RLQ_try3}), and for any $c_4<1/2$ of our choice. 
 We now specify that we take 
 \beq\label{c4_choice}
 c_4 \leq \frac{2^{-(n-1)/2}}{8C_3} c_0.
 \eeq
There exists a constant $R_4$ chosen appropriately large relative to $\Del_0, C_3, \ep_1$, such that for all $R \geq R_4$, $C_3 (R/L)^{-\Del_0 \ep_1/2} \leq c_02^{-(n-1)/2}/8$.  Then under these conditions,
 \[ |E(3)| \leq    \left( \frac{R}{LQ^{1/2}} \right)^{n-1} 2^{-(n-1)/2}(c_0/4)  .
 \]
 Now finally we take  $R_0 = \max \{R_1,R_2,R_3, R_4\}$. We specify that 
   \beq\label{c_del'}
c_0^* \leq 2^{-n}, \qquad \del_0^*  \leq \min \{ \del_0(c_0),  c_02^{-(n-1)/2}/C_4\}.
\eeq
These restrictions depend only on $n,\phi$.
The former condition assures that for all $c_0 \leq c_0^*$ we have $c_0 \leq (1-c_0)^n$.
  Then for $R \geq R_0$ and under the conditions (\ref{c_del'}), we have shown that 
  \[  |E(1)| + |E(2)| + |E(3)| \leq  (1/2) 2^{-(n-1)/2} c_0  \left( \frac{R}{LQ^{1/2}} \right)^{n-1}  \leq (1/2)2^{-(n-1)/2} (1-c_0)^n  \left( \frac{R}{LQ^{1/2}} \right)^{n-1},
  \]
as claimed in (\ref{conseq3_0}).

\subsection{Heuristics to motivate choices for the parameters}
Recall our key goal inequality (\ref{Bou''}), which would follow from (\ref{goal_tilde_f}) under the assumption that $\Omega^*$ has positive measure independent of $R$. Our construction only shows that $|\Omega^*|$ is at least proportionate to $c_{\ep_0} Q^{-\ep_0},$ for any $\ep_0$ of our choice. In this setting, we will prove (\ref{Bou''}) directly.
Recall the computation of the norm $\|f\|_{L^2} = S_1^{-1/2} (R/L)^{\frac{n-1}{2}}  \| \phi \|^n_{L^2}$ from (\ref{f_norm_computation}),
as well as the lower bound (\ref{conseq3}) and the measure of $|\Omega^*|$. 
We can verify  (\ref{Bou''}) for each $s <  s^* :=n/(2(n+1))$ if we can show that for each such $s$, there is an $\ep_0$ small enough that
 \beq\label{end_game2}
 \left( \frac{R}{L Q^{1/2}}\right)^{n-1}  S_1^{1/2} (R/L)^{-(\frac{n-1}{2})} Q^{-\ep_0}  \geq   A_s R^{s'}
 \eeq
for some $s' > s$.
 (Here we may take  $A_s $ depending on $s, n, \phi, \Del_0, \mu_0, \ep_0$ and all previous constants we have chosen in terms of these parameters.)  
Our goal now is to choose $S_1,L,Q$ so as to verify (\ref{end_game2}), and also fit all the constraints we have previously imposed. 
 Then we will be able to conclude that for all $s <n/(2(n+1))$, for every $R \geq R_0$ we have constructed a Schwartz function $f=f_R$ so that $\hat{f}$ is supported in the annulus $A_n(R,4 \sqrt{n})$ and (\ref{Bou''}) holds, concluding the proof of the main theorem.

\emph{A priori} we aim to choose $L,S_1,Q$ in terms of $R$ so as to prove (\ref{end_game2}) for the largest value of $s$ possible; we will see that the limit of this construction  is $s< s^*=n/(2(n+1))$. We temporarily pretend that $\ep_0$ is zero, so that we can simplify our computations. Then once we have motivated our choices for $L,S_1,Q$, we will compute precisely.
Assign the notations $\lam, \sig,\kappa$ according to
\[ L=R^\lam, \qquad S_1 = R^\sig, \qquad Q = R^\kappa.
\]
 We summarize the key constraints on $\lam,\sig,\kappa$ as follows.

The truth of the core inequality (\ref{end_game2}) for some $s'>s$ is equivalent (assuming $\ep_0=0$ for the moment) to 
\beq\label{con_core}
s <  (n-1)/2 + \sig/2 - (\kappa + \lam)(n-1)/2  ,
 \eeq
 which we want to hold for $s$ as large as possible.
  We also have the constraint $\sig \leq 1/2$ from (\ref{sig}), and $1/2< \lam < 1$ from \S \ref{sec_3_ppties}.
From the conditions (\ref{UV_assps}) we have
\beq\label{lamkapsum} 2\lam + \kappa \geq 1 + \sig, \qquad \lam + \kappa( \frac{n}{n-1}) \geq 1.
\eeq
Using the linear combination of $1/(n-1)$ times the first inequality plus $1$ times the second inequality, we see that 
\beq\label{lam_kap}
 \lam + \kappa \geq \frac{n+ \sig}{n+1}.
 \eeq
The upper bound in (\ref{con_core}) will be largest when $\lam + \kappa$ is smallest, so it is optimal to choose $\lam,\kappa$ so that equality holds in (\ref{lam_kap}), in which case (\ref{con_core}) will hold for all
\beq\label{s_limit}
 s< \frac{ n-1 + 2\sig}{2(n+1)}.
 \eeq
Since we want to take $s$ as large as possible, this motivates us to take $\sig$ as  large as possible, that is 
\[ \sig=1/2.\] 
This illuminates why the largest exponent we could win from Bourgain's construction is $s < s^* = n/(2(n+1))$, no matter how we choose $L,Q$.

Finally we need to choose $\kappa,\lam$ so that the two constraints in (\ref{lamkapsum}) hold, and equality holds in (\ref{lam_kap}). 
 The first two constraints represent a region in the first quadrant of the $(\kappa,\lam)$-plane bounded by two lines, and these two lines intersect the line representing equality in (\ref{lam_kap}) in precisely one point, namely $(\kappa,\lam) = (\frac{n-1}{2(n+1)},\frac{n+2}{2(n+1)})$. Thus this is the unique choice of $\lam,\kappa$ that meets all our requirements. 
These choices correspond to defining
\beq\label{LR_guess}
S_1 = R^{1/2}, \qquad L = R^{\frac{n+2}{2(n+1)}}, \qquad Q = R^{\frac{n-1}{2(n+1)}}.
\eeq
This corresponds to the value $\Del_0 = 1/(n-1)$ and $\ep_1 = 1 /(1+\Del_0)$ in  conditions (\ref{RLQ_try2}) and (\ref{RLQ_try3}). These are also the choices that Bourgain states.

 \subsection{Precise   conclusions}
 Having motivated our choices for $S_1,L,Q$, we   perform the final verifications precisely. 
We fix any $s < s^* =n/(2(n+1))$, and we aim to show that (\ref{end_game2}) holds for some $s'> s$, where we may take $\ep_0$ as small as we like. This will hold if $\sig, \kappa,\lam,\ep_0$ are such that  
\beq\label{con_core'}
s <  (n-1)/2 + \sig/2 - (\kappa + \lam)(n-1)/2 - \ep_0 \kappa.
 \eeq
 The relation (\ref{lam_kap}) still holds, and we will choose $\lam,\kappa$ so that equality holds in (\ref{lam_kap}),
 so that (\ref{con_core'}) becomes the relation
\[
  s< \frac{ n-1 + 2\sig - 2\ep_0 \kappa (n+1)}{2(n+1)}.
\]
To make the right-hand side as large as possible we choose $\sig=1/2$, obtaining
  \beq\label{s_limit''}
  s< \frac{ n  - 2\ep_0 \kappa (n+1)}{2(n+1)}.
 \eeq
 We choose $\lam,\kappa$ as before (depending only on $n$), and then take $\ep_0$ arbitrarily small. 
 
This implies that for every $s< n/(2(n+1))$, the following holds. 
There exists a constant $C$ depending only on $n$ and a constant $R_0$ depending only on $n,\phi$ such that for every integer $R \geq R_0$ we can construct a Schwartz function $f=f_R$ with $\hat{f}$ supported in the annulus $A_n(R,4\sqrt{n})$ such that (\ref{end_game2}) and hence (\ref{Bou''}) holds.
 This completes the proof of Theorem 
 \ref{thm_restate} and hence of Theorem \ref{thm_Bou}.
 
 \section*{Appendix A: convergence results}\label{sec_conv}
 \setcounter{equation}{0}
 \renewcommand{\theequation}{A.\arabic{equation}}
 For the benefit of a general audience, we recall the relationship between  maximal functions and pointwise convergence;  
these ideas underly many results in the literature, and similar expositions   can be found for example in \cite[Thm. 5]{Sjo87} or \cite[Appendix C]{BBCR11}.

 \subsection*{Positive results}

 If $f$ is a Schwartz function, then
\[( e^{it \Del} f)(x) = (T_t f)(x)= \frac{1}{(2\pi)^n} \int_{\R^n} \hat{f}(\xi) e^{ i (\xi \cdot x + |\xi|^2 t)} d\xi.\]
We may also write $T_t f = K_t * f$ with $K_t(x) = t^{-n/2} K(x/t^{1/2})$ where $K$ is the Fourier transform of $e^{i|\xi|^2}$, that is
$K(x) = c_n e^{-i |x|^2/4}$ for a constant $c_n$.
For $f \in H^s(\R^n)$, for any fixed $t>0$ we can define $T_t f$ as the limit in the $L^2$ sense, since $T_t$ is a bounded operator on $L^2(\R^n)$, or alternatively we can fix a Schwartz function $\psi(x)$ and define 
\[ (T_t f)(x)= \lim_{N \maps \infty} \frac{1}{(2\pi)^n} \int_{\R^n}\psi(|\xi|/N) \hat{f}(\xi) e^{ i (\xi \cdot x + |\xi|^2 t)} d\xi,\]
which agrees with the $L^2$ limit pointwise a.e. (see e.g. \cite{BBCR11}).

If $f$ is Schwartz, upon applying the integral representation to  the difference $T_tf(x ) -f(x)$, we see that 
\beq\label{ptwise_app}
 \lim_{t \maps 0} (e^{it \Del} f)(x) =f(x) 
 \eeq
  for every $x$. 
Positive results on the pointwise a.e. convergence for functions $ f \in H^s(\R^n)$  (for an appropriate fixed $s$) proceed by bounding the maximal operator defined by 
\[ T^* f(x) = \sup_{0<t<1}|T_tf(x)|.\]
For example, in order to prove that  pointwise convergence (\ref{ptwise_app}) holds for almost every $x$, for all functions $f \in H^s(\R^n)$, 
 it suffices to prove that for all Schwartz functions $f$,
\[  \| T^* f\|_{L^2(B_n(0,1))} \leq A \|f\|_{H^s(\R^n)}.\]
 
We see this as follows, recalling  the method of \cite[Thm. 5]{Sjo87}. Suppose that $f \in H^s(\R^n)$. 
To show that (\ref{ptwise_app}) holds for a.e. $x$ it would suffice to show that for every ball $B$ of finite radius, 
\beq\label{B_int}
 \int_{B} \limsup_{t \maps 0} |T_t f(x)  - f(x)|^2 dx =0.
 \eeq
For any $\ep>0$ there exists a Schwartz function $g$  with $\| f - g\|_{H^s(\R^n)} \leq \ep$, so that
\[ \limsup_{t \maps 0} |T_t f(x)  - f(x)| \leq \limsup_{t \maps 0} | (T_t f - T_t g)(x)| + | f(x) - g(x)|
	\leq T^* (f-g)(x) + |f(x) - g(x)|.\]
Also note that $\|f - g\|_{L^2(B)}  \leq  \|f - g\|_{L^2(\R^n)}  = (2\pi)^{-n/2}  \| (f - g)\hat{\:}\|_{L^2(\R^n)} \leq \| f-g \|_{H^s(\R^n)}  \leq \ep$.
Thus we have
\beq\label{B_int'}
  (\int_{B} \limsup_{t \maps 0} |T_t f(x)  - f(x)|^2 dx)^{1/2} \leq \|T^* (f-g)\|_{L^2(B)} + \ep.
  \eeq
Thus it suffices to show   that  for every ball $B$ of finite radius, there exists a constant $C_B$ such that for all $h \in H^s(\R^n)$,
\beq\label{allB}
\| T^* h \|_{L^2(B)}  \leq C_B \|h\|_{H^s(\R^n)} .
\eeq
We would then apply this with $h=f-g$ to conclude that (\ref{B_int'}) is at most $(C_B+1)\ep$, which suffices. 

We now show that we can conclude (\ref{allB}) holds if we can show it for all Schwartz functions.  
Fix $f \in H^s(\R^n)$ and  a sequence of Schwartz functions $f_m$ such that $\|f_m - f\|_{H^s(\R^n)} \maps 0$.
By Fatou's lemma (after passing to a subsequence, if necessary), and our assumed inequality for Schwartz functions,
\beq\label{B_comp}
 \int_B |T^* f(x)|^2 dx \leq \liminf_{m \maps \infty} \int_B |T^* f_m(x)|^2 dx 
	\leq \liminf_{m \maps \infty} C_B^2 \|f_m\|_{H^s}^2  = C_B^2 \|f\|_{H^s}^2,\eeq
as desired.
  Indeed it even suffices to prove (\ref{allB}) only for the ball $B=B_n(0,1)$; certainly 
we can deduce (\ref{allB}) for any finite radius ball $B$ if we can show it for all unit balls $B$. To see that it suffices in particular to consider the unit ball $B_n(0,1)$ at the origin, we only note that for any fixed shift $u \in \R^n$, $(T_t f)(x+u) = (T_t g_u) (x)$ where $g_u(x)$ is defined by $\hat{g_u} (\xi)= \hat{f}(\xi) e^{i \xi \cdot u}$, so that $\|g_u\|_{H^s(\R^n)} = \|f\|_{H^s(\R^n)}$.

\subsection*{Negative results}
Counterexamples (such as e.g. \cite{Bou13,Bou16,BBCR11,LucRog17,LucRog19a,LucRog19,DemGuo16}) to pointwise a.e. convergence in (\ref{ptwise_app}) for functions $f \in H^s(\R^n)$ (or other spaces)  rely on a type of converse to the above argument, which holds due to a maximal principle of Stein \cite{Ste61}.
We lay out the necessary steps here, using the version of the maximal principle stated in \cite[Ch. X \S 3.4]{SteinHA}. 
 In what follows we will let $C_n$ denote a constant that depends on $n$, which may change from one instance to the next, and similarly for $C_s, C_{n,s}$ and so forth.  We thank Jongchon Kim for suggesting the presentation we follow here.

We claim: if for a given $s>0$ it is true that for all $f \in H^s(\R^n)$, (\ref{ptwise_app}) holds for a.e. $x$, then it is true that for all $f \in H^s(\R^n)$,
\beq\label{L2}
\| T^* f \|_{L^2(B_n(0,1))} \leq C_{s}  \|f\|_{H^s(\R^n)}.
\eeq
Once  (\ref{L2}) holds, then since $\| T^* f \|_{L^1(B_n(0,1))}\leq C_n \| T^* f \|_{L^2(B_n(0,1))}$,  all such $f$ must also satisfy
\beq\label{L1}
\| T^* f \|_{L^1(B_n(0,1))} \leq C_{n,s} \|f\|_{H^s(\R^n)}.
\eeq
Consequently, if we can show for a given $s$ that (\ref{L1}) is violated by some function, the pointwise convergence result (\ref{ptwise_app}) must also fail.
This principle underlies the counterexample of Bourgain that is the subject of the present note.
In particular, if $\supp \hat{f} \subseteq A_n(R,C_n)$, the right-hand side of (\ref{L1}) is comparable to $C'_{n,s} R^{s} \|f\|_{L^2(\R^n)}$,  so that if we prove (\ref{Bou''})   for some $s'>s$, this provides a violation of (\ref{L1}).
 
To prove the claim  (\ref{L2}), we will  use  the following notation. Recall the operator $G_s$ defined by
$(G_s g) \hat{\;}(\xi) = (1+|\xi|^2)^{-s/2}\hat{g}(\xi)$, so that $g \in L^2$ if and only if $G_s g \in H^s$.  Then (\ref{ptwise_app}) is equivalent to the hypothesis that   for all $g \in L^2(\R^n)$,
\beq\label{ptwise_app'}
\lim_{t \maps 0} T_t G_s g(x) = G_s g(x) \quad \text{ holds for a.e. $x$}.
\eeq
There are two steps:\\
(1)  Maximal Principle:  if for a given $s>0$ it is true that for all $g \in L^2(\R^n)$, (\ref{ptwise_app'}) holds for a.e. $x$,  then a weak-type $L^2$ bound holds, namely that for all $g \in L^2(\R^n)$, for all $\al>0$
\beq\label{wL2}
|\{ x \in B_n(0,1) : (T^* G_s g)(x) > \al \}| \leq \frac{A_s}{\al^2} \|g\|_{L^2(\R^n)}^2.\eeq
(2) A H\"older inequality for Lorentz spaces on the unit ball:
\[ \| T^* G_s g\|_{L^1(B_n(0,1))} \leq C_n \| T^*G_s  g\|_{L^{2,\infty}(B_n(0,1))}.\]
Applying (\ref{wL2}) shows that the right-hand side of point (2) is bounded above by $C_n A_s^{1/2} \| g\|_{L^2(\R^n)}.$
Finally, given $f \in H^s(\R^n)$, let $g = G_{-s} f$ and apply the above inequalities to $g$, to conclude that (\ref{L2}) holds for $f$.

We prove point (1) by the maximal principle.
We first note that instead of considering the family $\{ T_t \}_{t >0}$ with continuous parameter $t>0$, we may apply the maximal principle to the family $\{ T_{t_k} \}_{k >0}$ with $k$ varying over the discrete set $\mathbb{N}$, and with $\{t_k\}_k$ an enumeration of the positive rationals, and with corresponding maximal function $T^{(*)} := \sup_k T_{t_k}$. 
This is because as observed in \cite[\S 12]{Ste61}, $\sup_{t>0} |T_t f(x)| = \sup_{k} |T_{t_k} f(x)|$ since for each $x$, $(T_t f)(x)$ is continuous in $t$. Thus we may state our conclusions below for the maximal operator $T^{(*)}$, and they then also hold for $T^{*}$.
 Moreover, under the assumption that $\lim_{t \maps 0} (T_tf)(x)$ exists, we have $\limsup_{k \maps \infty} |T_{t_k}f(x)| < \infty$, which will be used below.

Fix $s>0$. We check that $\{ T_{t_k} G_s \}_k$ satisfies the criteria of the maximal principle.
 For each $k$ the operator $T_{t_k}G_s $ is bounded on $L^2(\R^n)$, and so in particular satisfies the property that if $g_m \maps g$ in $L^2(\R^n)$ then for each fixed $k$, $T_{t_k}G_s  (g_m) \maps T_{t_k} G_s (g)$ in measure. 
 Since we are assuming that for all $g \in L^2(\R^n)$ we have $\lim_{k \maps \infty} T_{t_k} G_s g(x) = g(x)$ for pointwise a.e. $x$, then certainly $T^{(*)}G_s g(x) < \infty$ on a set of positive measure. Thus by \cite[Ch. X \S 3.4]{SteinHA}, for the compact set $B_n(0,1)$ there exists a constant $A_s$ such that for all $g \in L^2(\R^n)$, for all $\al>0$,
\[
 |\{ x \in B_n(0,1) : (T^{(*)} G_s g)(x) > \al \}| \leq \frac{A_s}{\al^2} \|g\|_{L^2(\R^n)}^2.
\]
 This concludes the proof of point (1).

We prove point (2) by a direct argument. Indeed, for a finite measure set $B$, any function $f \in L^{2,\infty}(B)$ satisfies
$\|f\|_{L^1(B)} \leq 2 |B|^{1/2} \|f\|_{L^{2,\infty}(B)}.$
To see this, let $\lam(t) = |\{ x \in B: |f(x)|>t\}|$ so that
\[ \|f\|_{L^1(B)} = \int_0^\infty \lam(t)dt = \int_0^A \lam (t) dt + \int_A^\infty t^2 \lam (t) \frac{dt}{t^2},\]
for any $A>0$ of our choice.
The first term is bounded by $A|B|$, while the second term is bounded by $A^{-1}\|f\|_{L^{2,\infty}(B)}^2 $, so the inequality follows from choosing $A= B^{-1/2} \|f\|_{L^{2,\infty}(B)}$. This suffices to prove point (2).

 \section*{Appendix B: Classical number theoretic facts}\label{sec_exp}

\setcounter{equation}{0}
\renewcommand{\theequation}{B.\arabic{equation}}
 The proofs of Lemmas \ref{lemma_Gauss} and \ref{lemma_Weyl} both follow from what is commonly called the method of $S\overline{S}$ in number theory and $TT^*$ in harmonic analysis. Many standard texts, such as \cite{IK}, contain similar proofs.
 
\subsection*{Proof of Lemma \ref{lemma_Gauss}}
Note that in the sum $G(a,b;q)$, and in the complete sums that follow, we can sum over any complete set of residues modulo $q$, and in particular over any $q$ consecutive integers. We compute that 
\[ |G(a,b;q)|^2 = G(a,b;q) \overline{G(a,b;q)} = \sum_m \sum_n e^{2\pi i (m^2 a/q + mb/q)} e^{-2\pi i (n^2 a/q +nb/q)}.\]
We replace $m$ by $n+\ell$, so that
\beq\label{double_sum0}
|G(a,b;q)|^2 = \sum_{1 \leq \ell \leq q} \left( \sum_{1 \leq n \leq q} e^{2\pi i ( (n+\ell)^2a/q - n^2a/q + \ell b/q)} \right)
	=\sum_{\ell=1}^q e^{2\pi i (\ell^2 a/q+\ell b/q)} \left( \sum_{n=1}^q e^{2\pi i n(2\ell a/q)} \right).
\eeq
In the right-most inner sum over $n$, the phase is linear in $n$, and this is the main point of the proof. Indeed, 
for any real number $\theta$ such that $e^{2\pi i q \theta}=1$ but $e^{2\pi i \theta }\neq 1$ (in which case $e^{2 \pi i \theta}$ is a nontrivial $q$-th root of unity),
$
\sum_{n=1}^q e^{2\pi i n \theta} =0 .
$
 On the other hand, if $e^{2\pi i \theta} =1$ (which is true precisely when $\theta$ is an integer),
$ \sum_{n=1}^qe^{2\pi i n \theta} =q.$

 We apply this to (\ref{double_sum0}) with $\theta = 2\ell a/q$, recalling that $(a,q)=1$. We consider first the case when $q$ is odd. Then the right-most inner sum over $n$ in (\ref{double_sum0}) vanishes except when $\ell =q$, and then its value is $q$. Thus $|G|^2 = q$ in this case. 
If on the other hand $q$ is even,
 the right-most inner sum over $n$ in (\ref{double_sum0}) is non-zero for precisely two values of $\ell$:  $\ell=q$ and $\ell=q/2$. Summing the two resulting values,  
 \[ |G(a,b;q)|^2 = q ( e^{2\pi i (aq +b)} + e^{2\pi i (aq/4 + b/2)}) = q(1 + e^{\pi i (q/2+b))});\]
 in the last identity we used the fact that $(a,q)=1$ so that $a\con 1 \modd{2}$.
 Thus $|G(a,b; q)|^2$ is determined by the parity of $q/2+b$: it vanishes if $q/2+b \con 1 \modd{2}$; it equals $2q$ if $q/2+b \con 0 \modd{2}$. 
 When $q \con 2 \modd{4}$, the first case occurs when $b$ is even and the second case when $b$ is odd.
 When $q \con 0 \modd{4}$, the first case occurs when $b$ is odd and the second case when $b$ is even.

 \subsection*{Proof of Lemma \ref{lemma_Weyl}}
To prove Lemma \ref{lemma_Weyl}, we will proceed via an argument similar to that used in Lemma \ref{lemma_Gauss}, squaring and differencing in order to reduce to the case of a linear exponential sum. This we estimate via the standard result that for any real number $\theta$, for any $N \geq 1$, 
\beq\label{linear_Weyl}
 \sum_{M \leq n < M+ N} e^{2\pi i \theta n}   \leq  \min\{ N, (2\| \theta\|)^{-1}\},
 \eeq
in which $\|\theta\|$ denotes the distance from $\theta$ to the nearest integer. The trivial bound $N$ applies when $\theta =0$; otherwise, 
the sum is equal to $|\sin (\pi \theta N) /\sin (\pi \theta)|$ in absolute value, from which the estimate follows, using $|\sin (\pi \theta)| \geq 2\|\theta\|$.

Now we turn to the quadratic sum in question in Lemma \ref{lemma_Weyl}, which we denote by $S$.   First we suppose that $N \leq q/4$. We compute $|S|^2$ and re-write $n$ as $m+h$, so that
\begin{eqnarray*}
 |S|^2 &=& \sum_{M \leq n,m <M+ N} e^{2\pi i (\al (n^2 -m^2)+ \be (n-m))} 
  = \sum_{|h|< N} e^{2\pi i (\al h^2 + \be h)} \sum_{M \leq m, m+h < M+N} e^{2\pi i (2\al mh)}.
	 \end{eqnarray*}
Applying the trivial bound to the $h=0$ term and (\ref{linear_Weyl}) to the last sum, 
\[ |S|^2 \leq N + 2 \sum_{1 \leq h < N} \min\{N,(2\|\al 2h\|)^{-1}\} \leq N + 2\sum_{1 \leq h < 2N} \min\{N,  (2\|\al h\|)^{-1}\}.\]
We recall that $|\al - a/q| \leq 1/q^2$, with $(a,q)=1$. For any $1 \leq h <2N \leq q/2$, we claim that 
$\| \al h\| \geq \frac{1}{2} \| a h/q\|$. This is because since $q \ndiv h$ and $(a,q)=1$, we know that $ah/q$ is at least $1/q$ from the nearest integer. In combination with the fact that $|\al h - ah/q| \leq 2N/q^2 \leq 1/2q$  so that $\al h$ is at least $1/2q$ from the nearest integer, this suffices.

 Thus
\[ |S|^2 \leq   N + 2\sum_{1 \leq h < 2N \leq q/2} \min(N, \|a h/q\|^{-1}) \leq N +  2 ( \sum_{1 \leq h \leq q/2a} \|a h/q\|^{-1} + \sum_{q/2a \leq h \leq q/2} \|a h/q\|^{-1}).\]
 Therefore 
\[ |S|^2 \leq N + 4 q \sum_{1 \leq h < q/2}  1/h = N + O(q \log q), \] 
so that 
$ |S| =O( N^{1/2} + q^{1/2} (\log q)^{1/2}).$

We now turn to the case $N > q/4$. We may  write the integers $M \leq n < M+N$ as the union of $O(N(q/4)^{-1} + 1)$ blocks of at most $N'=q/4$ integers, and the first case then applies to the sum over each of these shorter blocks of integers. We then see that 
\[ |S| \leq O((N(q/4)^{-1} + 1)( N'^{1/2}  + q^{1/2}( \log q)^{1/2}) = O( (Nq^{-1/2} + q^{1/2})(\log q)^{1/2}).\]

\subsection*{Simultaneous Dirichlet approximation}
A standard reference is \cite{HarWri08}. 
Let a dimension $m \geq 1$ be fixed. Then every $y \in [0,1]^m$ can be approximated by $(a_1/q,\ldots, a_m/q)$ with $1 \leq a_1,\ldots, a_m \leq q$, $1 \leq q \leq Q$, with $|y_j - a_j/q| \leq (qQ^{1/m})^{-1}$ for each $1 \leq j \leq m$. To see this, fix an integer $P$ and divide the unit cube $[0,1]^m$ into smaller cubes of side-length $1/P$, of which there are $P^m$. We define a set of $P^m+1$ points in $[0,1]^m$ by considering the fractional part (that is, the value modulo 1) of $(ky_1,\ldots, ky_m)$ for each $0 \leq k \leq P^m$. By the pigeonhole principle, one of the smaller cubes must contain   two such points, say for the values $k' < k''$. Consequently, there exists some integral tuple $z \in \Z^m$ such that for each $1 \leq j \leq m$, $|(k''-k')y_j - z_j| \leq 1/P$. This yields 
$|y_j  - z_j/q| \leq (qP)^{-1}$, where $q = k'' - k' \leq P^m$, which suffices, with  $Q = P^m$. 
 
\section*{Funding Acknowledgements}
Pierce has been partially supported by NSF   CAREER grant DMS-1652173, a Sloan Research Fellowship, and the AMS Joan and Joseph Birman Fellowship. Pierce thanks the Hausdorff Center for Mathematics for providing a focused research environment during a portion of this work.

\bibliographystyle{alpha}
\bibliography{NoThBibliography}
\label{endofproposal}

\end{document}